\definecolor{darkblue}{rgb}{0.0,0.0,0.3}
\newcommand{\blue}[1]{\textcolor{blue}{#1}}
\newcommand{\red}[1]{\textcolor{red}{#1}}
\newcommand{\green}[1]{\textcolor{green!60!black}{#1}}
\newtheorem{theorem}{Theorem}[section]
\newtheorem{lemma}[theorem]{Lemma}
\newtheorem{proposition}[theorem]{Proposition}
\newtheorem{corollary}[theorem]{Corollary}
\theoremstyle{definition}
\newtheorem{example}[theorem]{Example}
\newtheorem{definition}[theorem]{Definition}
\newtheorem{question}[theorem]{Question}
\newtheorem{remark}[theorem]{Remark}
\newcommand{\QQ}{\mathbb{Q}}
\newcommand{\B}{\mathcal{B}}
\renewcommand{\O}{\mathcal{O}}
\renewcommand{\P}{\mathbb{P}}
\newcommand{\x}{{\mathbf x}}
\newcommand{\Sym}{\mathrm{Sym}}
\newcommand{\asc}{\mathop{\mathrm asc}}
\newcommand{\ps}{\mathop{\mathrm ps^1}}
\newcommand{\sign}{\mathop{\mathrm sign}}
\newcommand{\Vol}{\mathop{\mathrm Vol}}
\newcommand{\Park}{\mathrm{Park}}
\newcommand{\PSC}{\mathrm{OPP}}
\newcommand{\Hasse}{\mathop{Hasse}}
\newcommand{\caro}[1]{\todo[color=orange!30]{#1 \\ \hfill --- C.}}
\newcommand{\nantel}[1]{\todo[color=red!25]{#1 ---N.}}
\newcommand{\john}[1]{\todo[color=green!30]{#1 \\ \hfill --- J.}}
 \author{Carolina Benedetti}\address[Benedetti]
{Departamento de Matem\'aticas\\ Universidad de los andes\\ Bogot\'a\\ COLOMBIA}
\email{c.benedetti@uniandes.edu.co}
\urladdr{https://sites.google.com/site/carobenedettimath/}
\author{Nantel Bergeron}\address[Bergeron]
{Department of Mathematics and Statistics\\ York  University\\ To\-ron\-to, Ontario M3J 1P3\\ CANADA}
\email{bergeron@mathstat.yorku.ca}
\urladdr{http://www.math.yorku.ca/bergeron}
\thanks{With partial support of Bergeron's York University Research Chair and NSERC}
\author{John Machacek}\address[Machacek]
{Department of Mathematics and Statistics\\ York  University\\ To\-ron\-to, Ontario M3J 1P3\\ CANADA}
\email{machacek@yorku.ca}
\urladdr{https://www.yorku.ca/machacek/home.html}
\title[Hypergraphic polytopes]{Hypergraphic polytopes: combinatorial properties and antipode}
\date{} % delete this line to display the current date
\begin{document}

\maketitle

\begin{center}
{\em To the memory of Jeff Remmel}
\end{center}

\begin{abstract}
\smallskip
In an earlier paper, the first two authors defined orientations on hypergraphs. Using this definition we provide an explicit bijection between acyclic orientations in hypergraphs and faces of hypergraphic polytopes. This allows us to obtain a geometric interpretation of the coefficients of the antipode map in a Hopf algebra of hypergraphs. This interpretation differs from similar ones for a different Hopf structure on hypergraphs provided recently by Aguiar and Ardila. Furthermore, making use of the tools and definitions developed here regarding orientations of hypergraphs we provide a characterization of hypergraphs giving rise to simple hypergraphic polytopes in terms of acyclic orientations of the hypergraph. In particular, we recover this fact for the nestohedra and the hyper-permutahedra, and prove it for generalized Pitman-Stanley polytopes as defined here. 
\end{abstract}

\section{Introduction}

Given a collection of combinatorial objects, one often wants to study how these objects can be broken into simpler pieces and how they can be reassembled.
Joni and Rota observed that Hopf algebras provide a natural framework to do this~\cite{JoniRota}.
Here the coalgebra structure records the splitting, and the algebra structure records the assembly. The advantage of adding such structure to a given combinatorial family is that the coalgebra map allows to decompose into smaller pieces an object of the family. These pieces can be put back together somehow via the algebra structure. For instance, if one aims to color vertices of a graph in a way that neighbouring vertices have different colors, one may think of breaking the graph into pieces in such a way that each resulting piece is a subgraph with no edges, then color each piece and put them all back together to obtain a coloring of the original graph.

The algebraic and coalgebraic structure in a Hopf algebra allow to define another important piece of a Hopf algebra, namely, its antipode.
Given any graded connected Hopf algebra, the antipode is given by Takeuchi's formula~\cite{Tak}.
However, this formula can be rather complicated and it often contains many cancellations. In view of this a common problem surrounding such a Hopf algebra is: what is a cancellation free formula for its antipode? We will refer to this problem as the antipode problem. Part of the interest in finding a solution to the antipode problem is that its formula encodes information about the underlying combinatorial object. 

A solution for the antipode problem in the Hopf algebra of graphs\footnote{\emph{The} Hopf algebra of graphs (simplical complexes, hypergraphs) should really be \emph{a} Hopf algebra of graphs (simplical complexes, hypergraphs) since there are multiple Hopf algebra structures which can be defined. In this paper we will be explicit about the Hopf algebra construction that we will use.} was first found by Humpert and Martin~\cite{HM}.
Using sign reversing involutions Benedetti and Sagan, as well as Bergeron and Ceballos, were able to give solutions for the same problem for various Hopf algebras including the graph Hopf algebra~\cite{BS,BC}. In this case, the antipode formula encodes acyclic orientations of graphs.

The technique of sign reversing involutions has been used to solve the antipode problem for the Hopf algebra of simplicial complexes~\cite{BenedettiHallamMachacek}.
The first two authors have further generalized this way of obtaining optimal formulas for antipode maps and provided a formula for the antipode in the Hopf algebra of hypergraphs in~\cite{BB16}.
It is also shown how the understanding of a Hopf algebra structure on hypergraphs allows one to understand the structure of a larger class of Hopf algebras.

The antipode formula in the Hopf algebra of hypergraphs obtained in~\cite{BB16}  is much simpler than Takeuchi's formula, but it is not cancellation free. Thus it does not solve the antipode problem. However, one of the main results in this paper addresses this issue in a geometric fashion. This paper is organized as follows.

In Section~\ref{S:GeometricAntipode} we will give a geometric interpretation of the coefficients of the antipode of a hypergraph in terms of a polytope called the hypergraphic polytope.
This geometric interpretation will explain the cancellation in the antipode formula by showing that the coefficients in the antipode map are Euler characteristics.

Using our notion of orientations on hypergraphs defined in Section~\ref{S:GeometricAntipode}, we then turn our attention to the hypergraphic polytope itself and derive some geometric results in Section~\ref{S:polytope}.  More specifically we characterize hypergraphic polytopes that are simple by means of acyclic orientations. This particular result is illustrated with some specific families of hypergraphic polytopes: the nestohedra and the hyperpermutahedra. Moreover, we define and study the family of \emph{generalized Pitman-Stanley polytopes} which, as their name indicates, contain as a particular case the Pitman-Stanley polytope.

%We conclude in Section~\ref{S:csf} with a brief study of the chromatic symmetric function of hypergraphs.
%Aguiar, Bergeron, and Sottile enriched the study of Hopf algebras in combinatorics by giving the modern definition of combinatorial Hopf algebra (CHA)~\cite{ABS}.
%A CHA is a pair $(H, \zeta)$ where $H$ is a graded connected Hopf algebra over $\QQ$  and $\zeta: H \to \QQ$ is an algebra homomorphism.
%Given a Hopf algebra whose basis is indexed by a collection $\mathcal{C}$, the map $\zeta$ allows one to associate a quasisymmetric function (that depends on $\zeta$) to each $c \in \mathcal{C}$.This function encodes information about how $c$ can be split and put back together. It also serves as an analogue of chromatic symmetric function. We investigate this with our Hopf algebra of hypergraphs.

%%%%%%%%%%%%%%%%%%%%%%%%%%%
%%%%%%%%%%%%%%%%%%%%%%%%%%%
%%%%%%%%%%%%%%%%%%%%%%%%%%%
\section{Geometric Antipode for Hypergraphs} \label{S:GeometricAntipode}

As described in the introduction, a recurrent and often difficult problem in Hopf algebras is to find a cancellation free formula for the antipode of a Hopf algebra.
The first two authors showed in~\cite{BB16},  that the Hopf algebra of hypergraphs encode the antipode problem for a large family of Hopf algebras 
and they give a description of the antipode for hypergraphs in term of acyclic orientations on them.
This new formula, interesting on its own, still contains many cancellations. 
Here we show that the \emph{hypergraphic polytope} $P_G$ associated to a hypergraph $G$ encodes the coefficients in the antipode $S(G)$.
This differs from the case  of graphical zonotopes in~\cite{Aguiar-Ardila} which considers a different Hopf structure on hypergraphs.

%%%%%%%%%%%%%%%%%%%%%%%%%%%
\subsection{Hypergraphs and orientations}\label{ss:hyper_orient}
Let  $2^{V}$ denote the collection of subsets of a finite set $V$. Let 
      $${\bf HG}[V]=\big\{G \subseteq 2^V\,\mid  U\in G\text{ implies }|U|\ge 2\big\}$$
An element $G\in{\bf HG}[V]$ is a \emph{hypergraph on $V$}.
We pause to remark that with some conventions, elements of ${\bf HG}[V]$ are \emph{simple} hypergraphs since repeated subsets of $V$ are not allowed.
However, we will omit the adjective simple as all hypergraphs we consider will be of this type.

\begin{example}\label{ex:hypergraph}
Consider $V= \{a,b,c,d,e,f\}$ and let 
$$G=\big\{\{b,c\},\red{\{a,b,e\}},\blue{\{a,d,e,f\}},\green{\{b,c,e\}},\{f,c\}\big\}\in{\bf HG}[V].$$ 
We graphically  represent $G$ as follows:
$$
G=
\begin{tikzpicture}[scale=1,baseline=.5cm]
	\node (a) at (0,0) {$\scriptstyle a$};
	\node (b) at (1,.1) {$\scriptstyle b$};
	\node (c) at (2,0) {$\scriptstyle c$};
	\node (d) at (-.5,1) {$\scriptstyle d$};
	\node (e) at (.6,1.2) {$\scriptstyle e$};
	\node (f) at (0,1.5) {$\scriptstyle f$};
	\draw [fill=red!40] (.1,.1) .. controls (.5,.3) .. (.9,.2) .. controls (.65,.6) .. (.6,1) .. controls (.5,.5) .. (.1,.1) ; 
	\draw [fill=blue!40] (0,.15) .. controls (-.05,.6) .. (-.35,1) .. controls (-.1,1.05) .. (0,1.3)  .. controls (.1,1.1) .. (.5,1) .. controls (.1,.6) .. (0,.15) ; 
	\draw [fill=green!40] (1.8,.1) .. controls (1.5,.2) .. (1.1,.2) .. controls (1.1,.5) .. (.7,1) .. controls (1.4,.3) .. (1.8,.1) ; 
	\draw (b) .. controls (1.5,-.2) ..   (c); 
	\draw (f) .. controls (1.5,1.3) ..   (c); 
\end{tikzpicture} 
$$
\end{example}

\begin{remark} With our notation, it is important to specify the vertex set $V$ on which the hypergraph $G$ is constructed. For example $G=\emptyset$ is not the same 
hypergraph when constructed on $V=\emptyset$ or $V=\{1,2,3\}$:
$$
\begin{array}{ccc}
\blue{\emptyset} \qquad & 
\begin{tikzpicture}[scale=.7,baseline=.2cm]
	\node (a) at (.3,.6) {$\scriptstyle \bullet \blue{2}$};
	\node (b) at (1,1) {$\scriptstyle  \bullet \blue{1}$};
	\node (d) at (-.5,1) {$\scriptstyle  \bullet \blue{3}$};
\end{tikzpicture} \qquad& 
\begin{tikzpicture}[scale=.7,baseline=.2cm]
	\node (a) at (.3,.6) {$\scriptstyle \bullet \blue{2}$};
	\node (b) at (1,1) {$\scriptstyle  \bullet \blue{1}$};
	\node (d) at (-.5,1) {$\scriptstyle  \bullet \blue{4}$};
	\node (d) at (-1.2,.3) {$\scriptstyle  \bullet \blue{5}$};
	\node (d) at (1.7,.8) {$\scriptstyle  \bullet \blue{3}$};
\end{tikzpicture} \\
\emptyset\in {\bf HG}[\emptyset]&\qquad\emptyset\in {\bf HG}[\{1,2,3\}]\qquad&\emptyset\in {\bf HG}[\{1,2,3,4,5\}]\\
\end{array}
$$
\end{remark}

In~\cite{BB16}, we introduced a notion of orientation for hypergraphs that is related to our antipode formula.
We recall here the basic definitions.

\begin{definition}[Orientation] Given a hypergraph $G$ an \emph{orientation} $(\mathfrak{a},\mathfrak{b})$ of a hyperedge $U\in G$ is an ordered set partition $(\mathfrak{a},\mathfrak{b})$ of $U$. %such that $U=\mathfrak{a}\cup\mathfrak{b}$ and $\mathfrak{a}\cap\mathfrak{b}=\emptyset$.
We can think of the orientation $(\mathfrak{a},\mathfrak{b})$ as current or flow on $U$ from a single vertex $\mathfrak{a}$ to the vertices in $\mathfrak{b}$ in which case we say that $\mathfrak{a}$ is the \emph{head} of the orientation $\mathfrak{a}\rightarrow\mathfrak{b}$ of $U$.
It what follows we will want to think of the vertices in $\mathfrak{a}$ as being contracted to a single point while the vertices in $\mathfrak{b}$ remain as distinct points.
If $|U|=n$, then there are a total of $2^{n}-2$ possible orientations. An  \emph{orientation of $G$} is an orientation of all its hyperedges. Given an orientation ${\mathcal O}$  on $G$, we say that $(\mathfrak{a},\mathfrak{b})\in{\mathcal O}$ if $(\mathfrak{a},\mathfrak{b})$ is the orientation of a hyperedge $U$ in $G$.
\end{definition}

\begin{example}\label{hypergraph}
With $G=\big\{\{b,c\},\red{\{a,b,e\}},\blue{\{a,d,e,f\}},\green{\{b,c,e\}},\{f,c\}\big\}$, we can orient the edge $U=\{a,b,e\}$ in $2^3-2=6$ different ways;  three with a head of size 1: $(\{a\},\{b,e\})$, $(\{b\},\{a,e\})$, $(\{e\},\{a,b\})$, and three with a head of size 2: $(\{b,e\},\{a\})$, $(\{a,e\},\{b\})$, $(\{a,b\},\{e\})$. We represent this graphically as follows:
$$
\begin{tikzpicture}[scale=1,baseline=.5cm]
	\node (a) at (0,0) {$\scriptstyle a$};
	\node (b) at (1,.1) {$\scriptstyle b$};
	\node (e) at (.6,1.2) {$\scriptstyle e$};
	\draw [thick,color=red,->] (.1,.1) .. controls (.5,.3) .. (.9,.2) ; 
	\draw [thick,color=red,<-] (.6,1) .. controls (.5,.5) .. (.1,.1) ; 
\end{tikzpicture} ,\ 
\begin{tikzpicture}[scale=1,baseline=.5cm]
	\node (a) at (0,0) {$\scriptstyle a$};
	\node (b) at (1,.1) {$\scriptstyle b$};
	\node (e) at (.6,1.2) {$\scriptstyle e$};
	\draw [thick,color=red,->] (.9,.2) .. controls (.65,.6) .. (.6,1); 
	\draw [thick,color=red,<-] (.1,.1) .. controls (.5,.3) .. (.9,.2) ; 
\end{tikzpicture} ,\ 
\begin{tikzpicture}[scale=1,baseline=.5cm]
	\node (a) at (0,0) {$\scriptstyle a$};
	\node (b) at (1,.1) {$\scriptstyle b$};
	\node (e) at (.6,1.2) {$\scriptstyle e$};
	\draw [thick,color=red,->] (.6,1) .. controls (.5,.5) .. (.1,.1); 
	\draw [thick,color=red,<-] (.9,.2) .. controls (.65,.6) .. (.6,1) ; 
\end{tikzpicture} ,\ 
\begin{tikzpicture}[scale=1,baseline=.5cm]
	\node (a) at (0,0) {$\scriptstyle a$};
	\node (b) at (.8,.8) {$\scriptstyle be$};
	\draw [thick,color=red,->] (b) -- (a); 
\end{tikzpicture} ,\ 
\begin{tikzpicture}[scale=1,baseline=.5cm]
	\node (a) at (1,.1) {$\scriptstyle b$};
	\node (b) at (.3,.9) {$\scriptstyle ae$};
	\draw [thick,color=red,->] (b) -- (a); 
\end{tikzpicture} ,\ 
\begin{tikzpicture}[scale=1,baseline=.5cm]
	\node (a) at  (.6,1.2) {$\scriptstyle e$};
	\node (b) at (.5,0) {$\scriptstyle ab$};
	\draw [thick,color=red,->] (b) -- (a); 
\end{tikzpicture}.$$
To orient $G$, we have to make a choice of orientation for each hyperedge. For example we can choose 
 ${\mathcal O}=\big\{(\{b\},\{c\}),\red{(\{a\},\{b,e\})},\blue{(\{a,e\},\{d,f\})},\green{(\{b,c\},\{e\})},(\{f\},\{c\})\big\}$ and we represent this
 as 
 $${ G/\mathcal O}=
\begin{tikzpicture}[scale=1,baseline=.5cm]
	\node (a) at (.3,.5) {$\scriptstyle ae$};
	\node (b) at (1.5,.1) {$\scriptstyle bc$};
	\node (d) at (-.5,1) {$\scriptstyle d$};
	\node (f) at (0,1.5) {$\scriptstyle f$};
	\draw [thick,color=red,->]  (.4,.4).. controls (.5,.3) and (.5,-.1)..(.3,-.1) .. controls (.1,-.1) and (.1,.1).. (.3,.3); 
	\draw [thick,color=red,->] (.4,.4) -- (b); 
	\draw [thick,color=green!60!black,->] (b).. controls (.9,.6) .. (.45,.55); 
	\draw [thick,color=blue,->] (.2,.6).. controls (-.1,.8) .. (d); 
	\draw [thick,color=blue,->] (.2,.6).. controls (-0.1,1) .. (f); 
	\draw  [thick,->]  (f) .. controls (1.5,1.3) ..   (b); 
	\draw  [thick,->]  (1.7,.1) .. controls (1.8,-.1) and (2.2,-.1) ..(2.2,.2) ..  controls (2.2,.5) and (1.8,.3).. (1.7,.2); 
\end{tikzpicture} 
$$
Notice here, as we have previously stated, for an orientation $(\mathfrak{a}, \mathfrak{b})$ of a hyperedge we picture the vertices in $\mathfrak{a}$ as contracted to a single vertex.
A directed edge is then placed between this single vertex and each vertex in $\mathfrak{b}$.
\end{example}

In general, given a hypergraph $G$ on the vertex set $V$ and an orientation $\mathcal O$ of $G$, we construct an oriented (not necessarily simple) graph $G/{\mathcal O}$ as follows. 
We let $V/{\mathcal O}$ be the set partition of $V$ defined by the transitive closure of the relation $a\sim  a'$ if $a,a'\in \mathfrak{a}$ for some head $\mathfrak{a}$ of $\mathcal O$.
For each oriented hyperedge $(\mathfrak{a},\mathfrak{b})$ of $\mathcal O$, we have $|{\mathfrak b}|$ oriented edges $([{\mathfrak a}],[b])$ in $G/{\mathcal O}$ 
where $[{\mathfrak a}],[b]\in V/{\mathcal O}$ are equivalence classes and $b\in \mathfrak{b}$. 

\begin{definition}[Acyclic orientation] An orientation $\mathcal O$ of $G$ is \emph{acyclic} if the oriented graph $G/{\mathcal O}$ has no cycles.
\end{definition}

\begin{example}\label{ex:124_234}
 Let $G=\big\{\blue{\{1,2,4\}},\red{\{2,3,4\}}\big\}$ be a hypergraph on $V=\{1,2,3,4\}$. As we can see the orientations ${\mathcal O}=\big\{\blue{ (\{4\},\{1,2\})},\red{(\{2,4\},\{3\})}\big\}$ and ${\mathcal O}'=\big\{ \blue{(\{4\},\{1,2\})},$ $\red{(\{2,3\},\{4\})}\big\}$ are not acyclic, but  ${\mathcal O}''=\big\{\blue{ (\{4\},\{1,2\})},\red{(\{4\},\{2,3\})}\big\}$ is acyclic:
$$
\begin{array}{cccc}
\begin{tikzpicture}[scale=.7,baseline=.2cm]
	\node (a) at (0,0) {$\scriptstyle 2$};
	\node (b) at (1,.1) {$\scriptstyle 1$};
	\node (d) at (-.5,1) {$\scriptstyle 3$};
	\node (e) at (.6,1.2) {$\scriptstyle 4$};
	\draw [fill=blue!40] (.1,.1) .. controls (.5,.3) .. (.9,.2) .. controls (.65,.6) .. (.6,1) .. controls (.5,.5) .. (.1,.1) ; 
	\draw [fill=red!40] (0,.15) .. controls (-.05,.6) .. (-.35,1) .. controls (.1,.8) .. (.5,1) .. controls (.1,.6) .. (0,.15) ; 
\end{tikzpicture}\quad & 
\begin{tikzpicture}[scale=.7,baseline=.2cm]
	\node (a) at (.3,.6) {$\scriptstyle 24$};
	\node (b) at (1,.1) {$\scriptstyle 1$};
	\node (d) at (-.5,1) {$\scriptstyle 3$};
	\draw [thick,color=blue,->]  (.4,.4).. controls (.5,.3) and (.5,-.1)..(.3,-.1) .. controls (.1,-.1) and (.1,.1).. (.3,.3); 
	\draw [thick,color=blue,->]  (.4,.4).. controls (.5,.3) ..(.9,.1); 
	\draw [thick,color=red,->]  (.1,.7)--(-.4,1); 
\end{tikzpicture} \quad& 
\begin{tikzpicture}[scale=.7,baseline=.2cm]
	\node (a) at (0,0) {$\scriptstyle 23$};
	\node (b) at (1,.1) {$\scriptstyle 1$};
	\node (e) at (.6,1.2) {$\scriptstyle 4$};
	\draw  [thick,color=blue,->]  (.6,1) .. controls (.5,.5) .. (.1,.1) ; 
	\draw  [thick,color=blue,<-]  (.9,.2) .. controls (.65,.6) .. (.6,1) ; 
	\draw [thick,color=red,<-]  (.5,1) .. controls (.1,.6) .. (0,.15) ; 
\end{tikzpicture} \quad& 
 \begin{tikzpicture}[scale=.7,baseline=.2cm]
	\node (a) at (0,0) {$\scriptstyle 2$};
	\node (b) at (1,.1) {$\scriptstyle 1$};
	\node (d) at (-.5,1) {$\scriptstyle 3$};
	\node (e) at (.6,1.2) {$\scriptstyle 4$};
	\draw  [thick,color=blue,->]  (.6,1) .. controls (.5,.5) .. (.1,.1) ; 
	\draw  [thick,color=blue,<-]  (.9,.2) .. controls (.65,.6) .. (.6,1) ; 
	\draw [thick,color=red,->]  (.5,1) .. controls (.1,.6) .. (0,.15) ; 
	\draw [thick,color=red,<-]  (-.35,1) .. controls (.1,.8) .. (.5,1) ; 
\end{tikzpicture} \quad\\
G&G/{\mathcal O}&G/{\mathcal O}'&G/{\mathcal O}''\\
\end{array}
$$
Out of the possible 36 orientations of $G$ only 20 are acyclic:
$$  \begin{array}{c}
 \scriptstyle
\{ (\{4\},\{1,2\}),(\{4\},\{2,3\})\};\quad \{ (\{4\},\{1,2\}),(\{3\},\{2,4\})\};\quad \{ (\{4\},\{1,2\}),(\{3,4\},\{2\})\};\quad \{ (\{2\},\{1,4\}),(\{3\},\{2,4\})\};\\
 \scriptstyle
 \{ (\{2\},\{1,4\}),(\{2\},\{3,4\})\};\quad \{ (\{2\},\{1,4\}),(\{2,3\},\{4\})\};\quad \{ (\{1\},\{2,4\}),(\{4\},\{2,3\})\};\quad\{ (\{1\},\{2,4\}),(\{3\},\{2,4\})\};\\
\scriptstyle
\{ (\{1\},\{2,4\}),(\{2\},\{3,4\})\};\quad\{ (\{1\},\{2,4\}),(\{2,3\},\{4\})\};\quad \{ (\{1\},\{2,4\}),(\{2,4\},\{3\})\};\quad\{ (\{1\},\{2,4\}),(\{3,4\},\{2\})\};\\
\scriptstyle
\{ (\{1,2\},\{4\}),(\{3\},\{2,4\})\};\quad\{ (\{1,2\},\{4\}),(\{2\},\{3,4\})\};\quad \{ (\{1,2\},\{4\}),(\{2,3\},\{4\})\};\quad\{ (\{1,4\},\{2\}),(\{4\},\{2,3\})\};\\
\scriptstyle
\{ (\{1,4\},\{2\}),(\{3\},\{2,4\})\};\quad\{ (\{1,4\},\{2\}),(\{3,4\},\{2\})\};\quad \{ (\{2,4\},\{1\}),(\{3\},\{2,4\})\};\quad\{ (\{2,4\},\{1\}),(\{2,4\},\{3\})\}.
\end{array}$$
\end{example}

%%%%%%%%%%%%%%%%%%%%%%%%%%%
\subsection{Hopf algebra of hypergraphs}
\label{SS:hypergraphHopf}
The acyclic orientations of hypergraphs play an important role in the computation of their antipode
in the Hopf algebra of hypergraphs. This Hopf structure is the image under the Fock functor $\overline{\mathcal K}$
of the Hopf monoid of hypergraphs described in~\cite{BB16}. We recall here what this structure is explicitely.

Given two hypergraphs $G,G'\in {\bf HG[V]}$, we say the $G$ and $G'$ are isomorphic if there exists a permutation $\sigma\colon V\to V$ such that
  $G'=\big\{ \sigma(U)\mid U\in G\big\}$. In this case we write $G\sim G'$.
Let $H$ be the graded vector space
  $$H=\bigoplus_{n\ge 0} H_n=\bigoplus_{n\ge 0} {\mathbb Q} {\bf HG}[n]\big/_{\displaystyle \sim},$$
where $[n]=\{1,2,\ldots,n\}$. That is, for each $n\ge 0$, we consider $H_n={\mathbb Q} {\bf HG}[n]\big/_{\displaystyle \sim}$ the linear span of equivalence classes of hypergraphs on $[n]$.
This space has a structure of graded Hopf algebra given by the following operations.

\medskip
\noindent {\bf Multiplication:} Let $\uparrow^m_n\colon [n]\to \{1+m,\ldots,n+m\}$ be the map that sends $i\in[n]$ to $i+m$.
This induces a map from ${\bf HG}[n]$ to ${\bf HG}[\{1+m,\ldots,n+m\}]$ where
  $$G^{\uparrow^m_n} = \big\{  \{i+m: i\in U\} \mid U\in G \big\}.$$
For all $m,n\ge 0$, we have well defined associative linear operations $\mu_{m,n}\colon H_m\otimes H_n\to H_{m+n}$ given by
  $$\mu_{m,n}(G_1\otimes G_2)= G_1\cup G_2^{\uparrow^m_n},$$
  for $G_1\in {\bf HG}[m]$ and $G_2\in{\bf HG}[n]$. This operation extends to equivalence classes of hypergraphs, and it is commutative since 
    $$ \big(G_1\cup G_2^{\uparrow^m_n}\big) \sim \big( G_2\cup G_1^{\uparrow^n_m}\big).$$
 Thus, $\mu=\sum_{m,n} \mu_{m,n}\colon H\otimes H\to H$ defines a graded, associative, commutative multiplication on $H$.
 The unit $u$ for this operation is given by the unique hypergraph $\emptyset\in {\bf HG}[0]$.

\medskip
\noindent {\bf Comultiplication:} Given $K\subseteq [n]$ let $k=|K|$ and let $St\colon K\to [k]$ be the unique order preserving map between $K$ and $[k]$.
Given a hypergraph $G\in {\bf HG}[n]$ we let
  $$ G\big|_K=\{ U\in G\mid U\subseteq K\} \in {\bf HG}[K]. $$
  We can then use the map $St$ to get a hypergraph $St(G\big|_K)\in {\bf HG}[k]$. For all $m,n\ge 0$, we now have a well defined coassociative linear operations 
$\Delta_{m,n}\colon H_{m+n} \to H_m\otimes H_n$ given by
  $$\Delta_{m,n}(G)= \sum_{K\cup L = [m+n] \atop |K|=m,\ |L|=n} St(G\big|_K)\otimes St(G\big|_L),$$
  for $G\in {\bf HG}[m+n]$. This operation is clearly cocommutative. We have that $\Delta=\sum_{m,n} \Delta_{m,n}\colon H\to H\otimes H$ defines a graded, coassociative, cocommutative comultiplication on $H$.
 The counity for this operation is given by the map $\epsilon\colon H\to{\mathbb Q}$ defined by
   $$\epsilon(G)=\begin{cases} 1&\text{if } G=\emptyset\in{\bf HG}[0],\\ 0&\text{otherwise}.\end{cases}$$

The structure $(H,\mu,u,\Delta,\epsilon)$ gives a structure of graded, connected, commutative and cocommutative bialgebra on $H$. We recall that for such bialgebra
there is a unique antipode $S\colon H\to H$. That gives a structure of graded, connected, commutative and cocommutative Hopf algebra on $H$.

%%%%%%%%%%%%%%%%%%%%%%%%%%%
\subsection{Antipode and acyclic orientations} A \emph{set composition} $A=(A_1,A_2,\ldots,A_k)$ of $I$ is a sequence of nonempty and pairwise disjoint subsets such that $I=A_1\cup A_2\cup\cdots\cup A_k$. We denote this by $A\models I$ and the length $k$ of $A$ is denoted by $\ell(A)$.
One the the subsets $A_i$ is called a \emph{part}.
Similarly, an integer composition $\alpha=(a_1,a_2,\ldots,a_k)$ of $n$ is a sequence of positive integer such that $n=a_1+a_2+\cdots +a_k$. We denote this by $\alpha\models n$ and $k=\ell(\alpha)$
Given a set composition $A\models I$ we get an integer composition using cardinalities: $\alpha(A)=(|A_1|,|A_2|,\ldots,|A_k|)\models |I|$ and $\ell(A)=\ell(\alpha(A))$.

A \emph{set partition} $A = \{A_1, A_2, \ldots, A_k\}$ of $I$ is an unordered collection of nonempty and pairwise disjoint subsets such that $I=A_1\cup A_2\cup\cdots\cup A_k$.
We denote this by $A \vdash I$ and also call each of the subsets $A_i$ a \emph{part} of the partition $A$.

For any graded connected bialgebra $H$ the existence
of the antipode map $S\colon H\to H$ is guaranteed and it can be computed using Takeuchi's formula~\cite{Tak} as follows. For any finite $x\in H_n$
\begin{equation}\label{eq:takeuchi}
 S(x) =\sum_{\alpha\models n} (-1)^{\ell(\alpha)} \mu_\alpha \Delta_\alpha(x)
\end{equation}
Here, for $\ell(\alpha)=1$, we have $\mu_{\alpha}=\Delta_{\alpha}= \text{Id}$ the identity map on $H_n$, and for $\alpha=(a_1,\ldots,a_k)$ with $k>1$,
 $$  \mu_\alpha = \mu_{a_1,n-a_1}(\text{Id}\otimes\mu_{a_2,\ldots,a_k})\qquad\text{and}\qquad
       \Delta_\alpha =(\text{Id}\otimes\Delta_{a_2,\ldots,a_k}) \Delta_{a_1,n-a_1}.
 $$

In the case of hypergraphs, for $G\in {\bf HG[n]}$, the antipode formula gives
  $$ S(G) = \sum_{A\models [n]} (-1)^{\ell(A)} \mu_{\alpha(A)}\big(St(G\big|_{A_1})\otimes  \cdots \otimes St(G\big|_{A_k})\big).$$
But up to a permutation of $[n]$, we have that 
 $$ \mu_{\alpha(A)}\big(St(G\big|_{A_1})\otimes  \cdots \otimes St(G\big|_{A_k})\big)\quad \sim\quad  G\big|_{A_1}\cup G\big|_{A_2}\cup \cdots \cup G\big|_{A_k}.$$  
 We denote the right hand side by $G\big|_{A}=G\big|_{A_1}\cup G\big|_{A_2}\cup \cdots \cup G\big|_{A_k}$ and the antipode formula in this case is
 \begin{equation}\label{eq:anti_hyper}
   S(G) = \sum_{A\models [n]} (-1)^{\ell(A)} G\big|_{A}
   \end{equation}
which contains lots of cancellations. In~\cite{BB16} we give a new formula that involves acyclic orientations of hypergraphs.
To state it we need some notation.

\begin{definition}[Flats] For a hypergraph $G\in{\bf HG}[V]$, given a set composition $A\models V$ we say that $G\big|_{A}$ is a \emph{flat} of $G$. The set 
of all flats of $G$ is denoted by
  $$Flats(G)=\{G\big|_A \,:\, A\models V\}.$$
\end{definition}

Given $G\in{\bf HG}[V]$ and a flat $F\in Flats(G)$, let $A=(A_1,A_2,\ldots,A_k)$ be a finest set composition such that $F=G\big|_A$. Observe
that any permutation of the parts of $A$ gives the same flat $F$ and the set partition $V/F=\{A_1,A_2,\ldots,A_k\}$ is unique and well defined.
We denote by $G/F$ the hypergraph we obtain from $G$ by contracting all the hyperedges in $F$. 
%(More precisely, for any $U=\{i_1,i_2,\ldots i_r\}\subseteq V$
%we let $U/F=\{[i_1],[i_2],\ldots,[i_r]\}\subset V/F$ where $[i]$ denote the equivalent class of $i$ with respect to $V/F$.
%
%Now
%  $$G/F = \big\{ U/F\; : \; U\in G\setminus F \big\}.$$)\caro{I would delete this More precisely statement as it just confuses, mainly since $F$ is a subset of vertices bu definition, and here it mixes as subset of hyperedges. We need to be consistent}
For example,
$$\begin{array}{ccc}
\begin{tikzpicture}[scale=1,baseline=.5cm]
	\node (a) at (0,0) {$\scriptstyle a$};
	\node (b) at (1,.1) {$\scriptstyle b$};
	\node (c) at (2,0) {$\scriptstyle c$};
	\node (d) at (-.5,1) {$\scriptstyle d$};
	\node (e) at (.6,1.2) {$\scriptstyle e$};
	\node (f) at (0,1.5) {$\scriptstyle f$};
	\draw [fill=red!40] (.1,.1) .. controls (.5,.3) .. (.9,.2) .. controls (.65,.6) .. (.6,1) .. controls (.5,.5) .. (.1,.1) ; 
	\draw [fill=blue!40] (0,.15) .. controls (-.05,.6) .. (-.35,1) .. controls (-.1,1.05) .. (0,1.3)  .. controls (.1,1.1) .. (.5,1) .. controls (.1,.6) .. (0,.15) ; 
	\draw [fill=green!40] (1.8,.1) .. controls (1.5,.2) .. (1.1,.2) .. controls (1.1,.5) .. (.7,1) .. controls (1.4,.3) .. (1.8,.1) ; 
	\draw (b) .. controls (1.5,-.2) ..   (c); 
	\draw (f) .. controls (1.5,1.3) ..   (c); 
	\node at (0,.9) {$\scriptstyle \blue{A}$};
	\node at (.55,.45) {$\scriptstyle \red{B}$};
\end{tikzpicture}  \quad & 
\begin{tikzpicture}[scale=1,baseline=.5cm]
	\node (a) at (0,0) {$\scriptstyle a$};
	\node (b) at (1,.1) {$\scriptstyle b$};
	\node (c) at (2,0) {$\scriptstyle c$};
	\node (d) at (-.5,1) {$\scriptstyle d$};
	\node (e) at (.6,1.2) {$\scriptstyle e$};
	\node (f) at (0,1.5) {$\scriptstyle f$};
	\draw [fill=green!40] (1.8,.1) .. controls (1.5,.2) .. (1.1,.2) .. controls (1.1,.5) .. (.7,1) .. controls (1.4,.3) .. (1.8,.1) ; 
	\draw (b) .. controls (1.5,-.2) ..   (c); 
	\draw (f) .. controls (1.5,1.3) ..   (c); 
\end{tikzpicture}  \quad & 
\begin{tikzpicture}[scale=1,baseline=.5cm]
	\node (a) at (0,0) {$\scriptstyle a$};
	\node (d) at (-.5,1) {$\scriptstyle d$};
	\node (e) at (.6,1.2) {$\scriptstyle ebcf$};
	\draw [thick,color=red!80] (.6,1) .. controls (.5,.5) .. (.1,.1) ; 
	\draw [fill=blue!40] (0,.15) .. controls (-.05,.6) .. (-.35,1) .. controls (.1,.8)  .. (.5,1) .. controls (.1,.6) .. (0,.15) ; 
	\node at (0.05,.7) {$\scriptstyle \blue{A}$};
	\node at (.6,.4) {$\scriptstyle \red{B}$};
\end{tikzpicture}\\
G & F & G/F \\
\end{array}
$$
Given an orientation  ${\mathcal O}$ of $G/F$, denote by $V/ {\mathcal O}$ the set partition of $V$ obtained from
 the set partition $(V/F)/{\mathcal O}$, where the parts of $V/F$ are put together according to $(V/F)/{\mathcal O}$. For a hypergraph $G\in{\bf HG}[V]$, let $\mathfrak O(G)$ denote the set of all its acyclic orientations. 
%For ${\mathcal O}\in {\mathfrak O}(G)$, we denote by $|G/{\mathcal O}|$ the number of distinct equivalence classes induced on $V$  by the orientation $\mathcal O$. 
We now extend~\cite[Lemma 3.13]{BB16} to all set compositions. 
For $A=(A_1,A_2,\ldots,A_k)\models[n]$, and every $1\le i\le k$, let $A_{i,k}=A_i\cup A_{i+1}\cup\cdots\cup A_k$ and let 
    $G/\mathcal{O}_{i,k}=(G/\mathcal{O})\big|_{A_{i,k}}$ where $\mathcal O$ is an orientation of $G$.
    
\begin{remark}
The symbol $/$ is overloaded.
Its meaning will be clear from the context, but we warn the reader that the meaning of the symbol $/$ depends on what type of objects are involved.
\end{remark}

\begin{lemma}\label{lem:Omega}
Fix $G\in{\bf HG}[n]$. There is a surjection $\Omega$ and injection $\Psi$
 $$ \begin{tikzpicture}[scale=1,baseline=0cm]
	\node at (0,0) {$\{A\mid\   A\models [n]\}$};
	\node at (6.2,-.25) {$\displaystyle \bigcup_{F\in Flats(G)} {\mathfrak O}(G/F)\,,$};
	\draw [->>] (1.5,.1)--node[above=.1]{$\scriptstyle \Omega $} (4.3,.1);
	\draw [<-right hook] (1.5,-.1)--node[below=.1]{$\scriptstyle \Psi $} (4.3,-.1);
       \end{tikzpicture}
  $$
where the maps $\Omega$ and $\Psi$ depend on $G$ and are obtained as follows:
%\caro{to check: use of same notation for set partition and set composition}
\begin{enumerate}
 \item[(a)] For  $A=(A_1,A_2,\ldots, A_k)\models [n]$ we let $F=G\big|_A$. For each hyperedge $U\in G/F$ let $i=\min\{j: A_j\cap U\ne \emptyset\}$ then $(U\cap A_i,U- A_i)$ defines an acyclic orientation for each $U$ and it gives $\Omega(A)\in {\mathfrak O}(G/F)$.
 Furthermore $[n]/{\Omega(A)}$ is a refinement of $\{A_1,A_2,\ldots,A_k\}$. 
 \item[(b)] 
 For ${\mathcal O}\in {\mathfrak O}(G/F)$, let $\Psi(\mathcal O)=(A_1,A_2,\ldots, A_k)\models[n]$ be such that  and $A_i$ is the unique source of the restriction  $G/\mathcal{O}_{i,k}$  where $\min(A_i)$ is maximal among the sources of $G/\mathcal{O}_{i,k}$.
 Here a source is any vertex with no incoming edges.
\end{enumerate}
Also, we have that \,$\Omega\circ\Psi =\rm{Id}$ and for $(A_1,A_2,\ldots, A_k)=\Psi(\mathcal O)$ it follows that $[n]/{\mathcal O}=\{A_1,A_2,\ldots, A_k\}$.
\end{lemma}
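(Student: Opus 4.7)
My plan is to verify four items in order: well-definedness of $\Omega$ together with the refinement property, well-definedness of $\Psi$, the identity $\Omega\circ\Psi=\mathrm{Id}$, and the final equality $[n]/\mathcal O=\{A_1,\ldots,A_k\}$. Once these are in place, surjectivity of $\Omega$ and injectivity of $\Psi$ follow formally. I expect the main obstacle to be recovering the flat $F$ from $\Psi(\mathcal O)$, which will rest on an acyclicity argument forbidding self-loops.

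For part (a), I would first observe that each hyperedge of $F=G|_A$ lies inside a single $A_j$, so each connected component of $F$ also lies in a single $A_j$, and consequently the partition $V/F$ refines $\{A_1,\ldots,A_k\}$. A hyperedge $U\in G/F$ comes from some $\tilde U\in G\setminus F$, and $\tilde U$ cannot sit inside any single $A_j$; hence $U$ meets $A_{i(U)}$ and some $A_j$ with $j>i(U)$, both $U\cap A_{i(U)}$ and $U-A_{i(U)}$ are nonempty, and the pair is a bona fide orientation. Labeling every $V/\Omega(A)$-class by the index of the $A_j$ containing it --- legal because each head $U\cap A_{i(U)}$ is contained in $A_{i(U)}$, so the transitive-closure merges defining $V/\Omega(A)$ stay inside a single $A_j$ --- one sees that every directed edge of $G/\Omega(A)$ strictly increases this label. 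This yields simultaneously acyclicity and the refinement statement.

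For part (b), each restriction $G/\mathcal O_{i,k}$ inherits acyclicity from $\mathcal O$ and so has at least one source; distinct sources are disjoint subsets of $V$ with distinct minima, so the max-min choice is unique and iterating exhausts $[n]$. For the identity $\Omega\circ\Psi=\mathrm{Id}$, write $(A_1,\ldots,A_k)=\Psi(\mathcal O)$ and $F''=G|_{(A_1,\ldots,A_k)}$. The inclusion $F\subseteq F''$ is immediate since $V/F$ refines $V/\mathcal O$, so each component of $F$ lies in some $A_j$. For the reverse, any $\tilde U\in F''\setminus F$ would give a hyperedge $U\in G/F$ whose $V/F$-classes all lie in the single $V/\mathcal O$-class $A_j$; the $\mathcal O$-orientation of $U$ would then produce a directed loop at $A_j$ in $G/\mathcal O$, contradicting acyclicity (the convention that length-one loops count as cycles is visible in Example~\ref{ex:124_234}). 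Finally, given $U\in G/F$ with $\mathcal O$-orientation $(\mathfrak a,\mathfrak b)$ and $V/\mathcal O$-classes $A_{j_0}=[\mathfrak a]$, $A_{j_l}=[b_l]$, the topological-sort property of $\Psi$ (any directed edge $A_{j_u}\to A_{j_v}$ in $G/\mathcal O$ forces $j_u<j_v$) gives $j_0<j_l$ for each $l$, hence $i(U)=j_0$, and $\Omega$ reproduces head $\mathfrak a$ and tail $\mathfrak b$. The equality $[n]/\mathcal O=\{A_1,\ldots,A_k\}$ is immediate from the construction of $\Psi$, and the remaining assertions follow formally. The main obstacle is precisely the self-loop contradiction used in the reverse inclusion: it is the only place the full strength of acyclicity, beyond forbidding cycles of length $\geq 2$, is needed.
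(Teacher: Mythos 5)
Your proof is correct and follows essentially the same route as the paper's: the labeling of $[n]/\Omega(A)$-classes by part indices to get acyclicity in (a), iterated source-peeling for the well-definedness of $\Psi$ in (b), and the topological-sort property of $\Psi(\mathcal O)$ to conclude $\Omega\circ\Psi=\mathrm{Id}$. The one place you go beyond the paper is in explicitly verifying that the flat is recovered, i.e.\ $G\big|_{\Psi(\mathcal O)}=F$, via the self-loop contradiction; this is a worthwhile detail that the published proof leaves implicit, but it is not a different argument.
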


\begin{proof}
Set $V = \{1,2,\dots,n\}$.
   For (a), let  $A=(A_1,A_2,\ldots,A_k)\models V$ and $F=G\big|_A$.  For any $U\in G/F$, we always have $A_i\cap U\ne U$.
   Hence $(U\cap A_i,U\setminus A_i)$ for $i=\min\{j: A_j\cap U\ne \emptyset\}$ defines a proper orientation $\mathcal O$ of  $G/F$. 
   By construction, each head $\mathfrak{a}$ for $(\mathfrak{a}, \mathfrak{b}) \in \mathcal O$ is completely included within a part $A_i$ for a unique part $1\le i\le k$. 
   This implies that $V/{\mathcal O}$   refines $\{A_1,\ldots,A_k\}$ and it allows us to define a function $f\colon V/{\mathcal O}\to \{1,2,\ldots,k\}$ where $f([v])=i$ if and only if $[v]\subseteq A_i$.
   By construction of $\mathcal O$, for any $([{\mathfrak a}],[b])\in (G/F)/{\mathcal O}$ the function $f$ is such that $f([\mathfrak{a}])<f([b])$.
   Hence  $(G/F)/{\mathcal O}$ has no cycles and $\Omega(A)=\mathcal O$ is a well defined acyclic orientation of $G/F$.
   
   For  (b), let $\mathcal O$ be an acyclic orientation on $G/F$. Let us show that the set composition $\Psi({\mathcal O})=(A_1,\ldots, A_k)$ is well defined in (b).
   That is,  we will show how to construct the only possible set composition $(A_1,\ldots, A_k)$ satisfying the conditions of (b).
   Recall the vertices of $(G/F)/\mathcal{O}$ are equivalences class, and hence subsets of $V$.
   Let us consider the partial order on subsets of $V$ by $A < B$ whenever $\min (A) < \min (B)$ for $A,B \subseteq V$.
   This is a partial order on the subsets of $V$, but is a total order on the vertices of $(G/F)/\mathcal{O}$ since the vertices of $(G/F)/\mathcal{O}$ consists of a collection of disjoint subsets of $V$.
Set $G_1 = (G/F)/\mathcal{O}$. Given that $\mathcal{O}$ is an acyclic orientation then the directed graph $G_1$ must have a source.
Moreover, if we remove any collection of vertices from $G_1$, the remaining graph  still has a source.
If $(A_1,\ldots, A_k)$ is any set composition satisfying (b), then $A_i$ must be the largest source of $G_i$.
Thus the set composition $(A_1,\ldots, A_k)$ exists and is well defined.
It is clear from this realization that $\{A_1,\ldots, A_k\}=V/{\mathcal O}$.
   
 We now need to show that $\Omega\circ\Psi =\rm{Id}$. 
 For any $(\mathfrak{a},\mathfrak{b})\in {\mathcal O}$ we must have $\mathfrak{a}\subseteq A_i$ for some unique $1\le i\le k$. 
   We claim that
 $$A_j\cap   \mathfrak{b}\ne \emptyset\ \quad\implies \quad j>i$$
 If not, then there would be $j<i$ such that $A_j\cap \mathfrak{b}\ne \emptyset$.  This means that there is an edge from $A_i$ to $A_j$ in $G/{\mathcal O}_{j,k}$, which contradicts the fact that $A_j$ is a source of $G/{\mathcal O}_{j,k}$, hence $j$ must be such that $j>i$. Therefore $\Omega(\Psi({\mathcal O}))={\mathcal O}$.
\end{proof}

\begin{theorem}[\cite{BB16}, Theorem 3.16] \label{thm:bb16}
For $G\in{\bf HG}[n]$,
   $$S(G) = \sum_{F\in Flats(G)} a(G/F) F\,,\qquad\text{where}\qquad a(G/F)=\sum_{{\mathcal O} \in{\mathfrak O}(G/F)} (-1)^{|\Psi({\mathcal O})|}\,.$$
\end{theorem}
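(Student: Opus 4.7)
The plan is to reorganize Takeuchi's formula \eqref{eq:anti_hyper} by means of the surjection $\Omega$ and its section $\Psi$ from Lemma~\ref{lem:Omega}. Starting from
$$S(G) = \sum_{A \models [n]} (-1)^{\ell(A)} G\big|_A,$$
first group the sum by the flat $F = G\big|_A$ to obtain
$$S(G) = \sum_{F \in Flats(G)} \Bigg(\sum_{A \models [n],\, G|_A = F} (-1)^{\ell(A)}\Bigg) F.$$
The entire task then reduces to showing that the inner coefficient equals $a(G/F)$.

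Next, further refine the inner sum by partitioning the set compositions over $\mathfrak{O}(G/F)$ using Lemma~\ref{lem:Omega}. Since $\Omega$ maps $\{A \models [n] : G|_A = F\}$ onto $\mathfrak{O}(G/F)$, the inner sum decomposes as
$$\sum_{\mathcal O \in \mathfrak O(G/F)} \Bigg(\sum_{A \in \Omega^{-1}(\mathcal O)} (-1)^{\ell(A)}\Bigg).$$
The key claim, which finishes the theorem, is that each fiber sum equals $(-1)^{|\Psi(\mathcal O)|}$. Since $\Omega\circ\Psi = \mathrm{Id}$, the element $\Psi(\mathcal O)$ always lies in the fiber; the goal is to show that every other element of $\Omega^{-1}(\mathcal O)$ cancels out in pairs.

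To accomplish this, I would construct a sign-reversing involution $\iota$ on $\Omega^{-1}(\mathcal O) \setminus \{\Psi(\mathcal O)\}$ that changes $\ell$ by exactly one. The blocks of $V/\mathcal O$ are partially ordered by the directed graph $G/\mathcal O$, and an element $A = (A_1,\dots,A_k)$ of the fiber is precisely an ordered grouping of these blocks into consecutive ``antichains'' that respect this partial order; $\Psi(\mathcal O)$ is the distinguished element where each part is a single block, ordered by the ``largest source first'' rule of Lemma~\ref{lem:Omega}(b). Given $A \neq \Psi(\mathcal O)$, scan from left to right and locate the first index $i$ at which $A$ differs from $\Psi(\mathcal O)$: if at position $i$ the part $A_i$ strictly contains the corresponding block of $\Psi(\mathcal O)$, split off that block as a new part; otherwise merge $A_i$ with the appropriate neighbour. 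A direct check (using the head-assignment rule of $\Omega$) shows that the merge/split preserves both the flat $F$ and the orientation $\mathcal O$, so $\iota(A)$ stays in $\Omega^{-1}(\mathcal O)$.

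The main obstacle is verifying that this procedure is well-defined and genuinely involutive, which amounts to pinning down the precise mergeability condition: two consecutive parts $A_i,A_{i+1}$ may be merged without changing $\Omega(A)$ exactly when no hyperedge of $G/F$ has its head within $A_i$ and also meets $A_{i+1}$, and this condition automatically implies that the flat $F$ is preserved as well. Acyclicity of $\mathcal O$ together with the maximality criterion built into $\Psi$ (Lemma~\ref{lem:Omega}(b)) are what guarantee that the ``first deviation from $\Psi(\mathcal O)$'' is always of exactly one of the two types (splittable versus mergeable), making $\iota$ a bona fide involution. Once the involution is in place, only $\Psi(\mathcal O)$ survives and contributes $(-1)^{\ell(\Psi(\mathcal O))} = (-1)^{|\Psi(\mathcal O)|}$, completing the proof.
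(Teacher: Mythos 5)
Your proposal follows essentially the same route as the paper's own (sketched) proof: group Takeuchi's formula \eqref{eq:anti_hyper} first by the flat $F=G\big|_A$, then by the orientation $\Omega(A)\in\mathfrak{O}(G/F)$, and reduce everything to the fiber identity $\sum_{A\in\Omega^{-1}(\mathcal O)}(-1)^{\ell(A)}=(-1)^{|\Psi(\mathcal O)|}$, established by a sign-reversing involution fixing only $\Psi(\mathcal O)$. The paper defers the construction of that involution to~\cite{BB16}, so your additional merge/split sketch goes slightly beyond what is written here, but the decomposition and the key lemma invoked are identical.
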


\begin{proof}[Sketch of proof] A full proof is given in~\cite{BB16}. We sketch here a slightly different proof.
For $G\in{\bf HG}[n]$ and $F\in Flats(G)$, let ${\mathcal C}_G^F=\{A\models[n]\, : \, G\big|_A=F\}$. 
Starting from~\eqref{eq:anti_hyper} we have
$$
 S(G) = \sum_{A\models [n]} (-1)^{\ell(A)} G\big|_{A} = \sum_{F\in Flats(G)} \Big( \sum_{{\mathcal O} \in{\mathfrak O}(G/F)}
           \big( \sum_{A\in {\mathcal C}_G^F \atop \Omega(A)={\mathcal O}}  (-1)^{\ell(A)} \big) \Big) F
 $$
 The sign reversing involution in~\cite[Theorem 3.16]{BB16} gives
 $$ \sum_{A\in {\mathcal C}_G^F \atop \Omega(A)={\mathcal O}}  (-1)^{\ell(A)}  = (-1)^{|\Psi({\mathcal O})|}$$
 and this gives us the desired result.
\end{proof}

%%%%%%%%%%%%%%%%%%%%%%%%%%%
\subsection{Hypergraphic polytope}
One of our main goals is to give a geometric meaning to Theorem~\ref{thm:bb16}. One of the beautiful results in~\cite[Corollary 13.7]{Aguiar-Ardila} shows that the antipode of a simple graph can be recovered from the faces of its graphical zonotope. They also give a geometric interpretation~\cite[Corollary 21.3]{Aguiar-Ardila} for the antipode in $\mathcal A$ of simplicial complexes (see~\cite{BenedettiHallamMachacek}), which is an interpretation that was noted independently by the first author. 
The geometric object behind Theorem~\ref{thm:bb16} is the {hypergraphic polytope}.
We let $\{e_1,\dots,e_n\}$ denote the standard basis of $\mathbb R^n$.

\begin{definition} 
 Given a hypergraph $G\in{\bf HG}[n]$,
  the \emph{hypergraphic polytope} $P_G$ associated to $G$ is the polytope in ${\mathbb R}^n$
  defined by the Minkowski sum
      $$P_G=\sum_{U\in G} {\bf \Delta}_U,$$
      where ${\bf \Delta}_U$ is the simplex given by the convex hull of the points $\{e_i \mid i\in U\}$.
\end{definition}

So, a hypergraphic polytope is a Minkowski sum of standard simplicies.
We note that we are aware of such Minkowski sums being previously studied by in~\cite{AgnMorris, Agn, Ag16}.
We will consider a particular example of a hypergraphic polytope defined by Agnarsson~\cite{Ag16} call the hyper-permutahedra in Section~\ref{ss:hyper-perm}.

\begin{example}\label{ex:PG1}
Consider the hypergraph
$G=
\begin{tikzpicture}[scale=.7,baseline=.2cm]
	\node (1) at (0,.1) {$\scriptstyle 1$};
	\node (2) at (0,1) {$\scriptstyle 3$};
	\node (3) at (1,.5) {$\scriptstyle 2$};
	\draw [fill=blue!40] (.1,.1) .. controls (.25,.5) .. (.15,.9) .. controls (.4,.5) .. (.85,.5) .. controls (.5,.4) .. (.1,.1) ; 
	\draw [color=red!80,thick] (.9,.7).. controls (0.6,1) ..(0.2,1);
\end{tikzpicture} 
$.
We  have
$$\begin{array}{ccc}
 \begin{tikzpicture}[scale=1,baseline=.5cm]
	\node (1) at (0.6,1.0) {$\scriptstyle e_1$};
	\node (2) at (-.2,-.2) {$\scriptstyle e_2$};
	\node (3) at (1.5,-.2) {$\scriptstyle e_3$};
	\draw [fill=blue!40] (0,0) -- (.6,.75) -- (1.2,0) --(0,0) ; 
\end{tikzpicture} \quad &
 \begin{tikzpicture}[scale=1,baseline=.5cm]
	\node at (-.2,0) {$\scriptstyle e_2$};
	\node at (1.5,0) {$\scriptstyle e_3$};
	\draw [thick,color=red] (0,0) -- (1.2,0); 
\end{tikzpicture} \quad &
\begin{tikzpicture}[scale=1,baseline=.5cm]
	\draw [fill=gray!10] (0,0) -- (.6,.75) -- (1.8,.75) -- (2.4,0) --(0,0) ; 
	\draw [color=gray!10,fill=blue!20] (0,0) -- (.6,.75) -- (1.2,0) --(0,0) ; 
	\draw (0,0) -- (.6,.75) -- (1.8,.75) -- (2.4,0) --(0,0) ; 
	\draw [thick,color=red!80] (1.2,0) -- (2.4,0); 
\end{tikzpicture}\\
\blue{{\bf \Delta}_{123}}& \red{{\bf \Delta}_{23}} & P_G=\blue{{\bf \Delta}_{123}}+ \red{{\bf \Delta}_{23}}\\
\end{array}
$$
\end{example}

\begin{example}\label{ex:PG2}
For the hypergraph
$G'=
\begin{tikzpicture}[scale=.7,baseline=.2cm]
	\node (1) at (0,.1) {$\scriptstyle 1$};
	\node (2) at (2.5,.5) {$\scriptstyle 4$};
	\node (3) at (0,1) {$\scriptstyle 2$};
	\node (4) at (1,.5) {$\scriptstyle 3$};
	\draw [fill=blue!40] (.1,.1) .. controls (.25,.5) .. (.15,.9) .. controls (.4,.5) .. (.85,.5) .. controls (.5,.4) .. (.1,.1) ; 
	\draw [thick,color=red!80] (4).. controls (1.8,.6) ..(2);
\end{tikzpicture} 
$,
we  have
$$\begin{array}{ccc}
 \begin{tikzpicture}[scale=1,baseline=.5cm]
	\node (1) at (0.6,1.0) {$\scriptstyle e_1$};
	\node (2) at (-.2,-.2) {$\scriptstyle e_2$};
	\node (3) at (1.5,-.2) {$\scriptstyle e_3$};
	\draw [fill=blue!40] (0,0) -- (.6,.75) -- (1.2,0) --(0,0) ; 
\end{tikzpicture} \quad &
 \begin{tikzpicture}[scale=1,baseline=.5cm]
	\node at (-.2,0) {$\scriptstyle e_3$};
	\node at (1.2,.5) {$\scriptstyle e_4$};
	\draw [thick,color=red] (0,0) -- (1,.5 ); 
\end{tikzpicture} \quad &
\begin{tikzpicture}[scale=1,baseline=.5cm]
	\draw [fill=gray!10] (0,0) -- (.6,.75)-- (1.6,1.25) -- (2.2,.5) -- (1.2,0) --(0,0) ; 
	\draw [color=gray!10,fill=blue!20] (0,0) -- (.6,.75) -- (1.2,0) --(0,0) ; 
	\draw [dotted,color=red] (0,0)--(1,.5);
	\draw [dotted,color=blue] (1.6,1.25)--(1,.5)--(2.2,.5);
	\draw (0,0) -- (.6,.75)-- (1.6,1.25) -- (2.2,.5) -- (1.2,0) -- (0,0) ; 
	\draw [thick,color=red!80] (1.2,0) -- (2.2,0.5); 
\end{tikzpicture}\\
\blue{{\bf \Delta}_{123}}& \red{{\bf \Delta}_{34}} & P_{G'}=\blue{{\bf \Delta}_{123}}+ \red{{\bf \Delta}_{34}}\\
\end{array}
$$
which is a 3-dimensional polytope.
\end{example}

We want to get a good description of the normal fan ${\mathcal N}(P_G)$ of the hypergraphic polytope $P_G$.
We refer the reader to~\cite[Chapter 7]{Z95} for more details and notation about normal fans.
First let us describe the normal fan of a simplex. Given a linear functional $\mathbf x:\mathbb R^n\rightarrow\mathbb R$  we will identify $\mathbf x$ with the vector $(x_1,\dots,x_n)$ where $x_i:=\mathbf x(e_i)$. In this way, if $a=\sum_{i=1}^na_i e_i\in\mathbb R^n$, we have that $\mathbf x(a) = (a_1,\dots,a_n)\cdot(x_1,\dots,x_n)$. Now, notice that the faces of the simplex ${\bf\Delta}_U$ are in bijection with the nonempty subsets $K\subseteq U$. Thus, each cone in $\mathcal N({\bf\Delta}_U)$ is also indexed by such $K$. Moreover,

\begin{lemma}\label{lem:fansimplex}
 Let $U\subseteq [n]$ with $|U|=r\ge 2$. For $\emptyset\ne K\subseteq U$, the cone $C_{K, U}$ in $\mathcal N({\bf \Delta}_U)$ corresponding to the face ${\bf\Delta}_K$ of ${\bf\Delta}_U$
is given by
$$ C_{K, U}:=\{\mathbf x\in(\mathbb R^n)^* \mid x_i=x_j \text{ for } i,j\in K;\ x_i\ge x_j \text{ for } i\in K \text{ and } j\in U\setminus K\}.$$
\end{lemma}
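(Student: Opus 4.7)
The plan is to apply the basic definition of the normal cone directly, since the simplex $\mathbf{\Delta}_U$ is simple enough that the analysis of linear functionals is completely transparent. Recall that, for a polytope $P$, the cone of the normal fan $\mathcal{N}(P)$ corresponding to a face $F$ consists of all $\mathbf{x} \in (\mathbb{R}^n)^*$ whose maximum face on $P$ contains $F$; equivalently, the set of $\mathbf{x}$ for which every vertex of $F$ attains the maximum value of $\mathbf{x}$ over all vertices of $P$. So first I would invoke this characterization and then analyze it for $\mathbf{\Delta}_U$.

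The key observation is that for any $\mathbf{x} = (x_1, \dots, x_n)$ and any vertex $e_i$ of $\mathbf{\Delta}_U$ (with $i \in U$), we have $\mathbf{x}(e_i) = x_i$. Hence $\max_{p \in \mathbf{\Delta}_U} \mathbf{x}(p) = \max_{i \in U} x_i$, and this maximum is attained exactly on the face $\mathbf{\Delta}_{K'}$ where $K' = \{i \in U : x_i = \max_{j \in U} x_j\}$. I would then verify both inclusions: if $\mathbf{x} \in C_{K,U}$ as described on the right-hand side, then the common value of $x_i$ for $i \in K$ equals $\max_{j \in U} x_j$, so $K \subseteq K'$ and $\mathbf{\Delta}_K \subseteq \mathbf{\Delta}_{K'}$, placing $\mathbf{x}$ in the normal cone of $\mathbf{\Delta}_K$. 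Conversely, if $\mathbf{x}$ lies in this normal cone, then every $i \in K$ belongs to $K'$, which immediately forces $x_i = x_j$ for $i,j \in K$ and $x_i \geq x_j$ for $i \in K$, $j \in U \setminus K$.

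There is no real obstacle here beyond fixing conventions: one must decide whether normal cones are defined via maximizers or minimizers and whether they are taken as closed or open cones. The description in the lemma matches the standard closed-normal-cone-via-maximizers convention of \cite[Chapter 7]{Z95}, which is the one the later sections of the paper will use when describing $\mathcal{N}(P_G)$ through Minkowski sums. I would state this choice explicitly at the start of the proof so that the subsequent arithmetic with the vectors $x_i$ is unambiguous, and the rest follows in a couple of lines.
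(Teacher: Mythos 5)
Your proposal is correct and follows essentially the same route as the paper's proof, which simply observes that the vertices of ${\bf \Delta}_K$ are $\{e_a : a\in K\}$ and that the linear functionals attaining their maximum on that face are exactly those described by $C_{K,U}$. You spell out the argmax analysis and the convention for closed normal cones in more detail, but the underlying argument is identical.
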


\begin{proof}
The vertices in the face ${\bf \Delta}_K$ are $\{e_a\,:\, a\in K\}$. Thus the linear functionals $\mathbf x$ attaining their maximum at this face are precisely those described by $C_{K,U}$.

\end{proof}

\begin{remark} The nontrivial faces ${\bf \Delta}_K$ of ${\bf \Delta}_U$ are in bijection with the orientations of the hyperedge $U$. For instance, taking $U_1=\{a,b\}$ and $U_2=\{a,b,c\}$ gives us the following labeling of the corresponding faces 
$$\begin{array}{cc}
 \begin{tikzpicture}[scale=1.5,baseline=.0cm]
	\node (a) at (-1,0) {$\scriptstyle e_a$};
	\node (b) at (1,0) {$\scriptstyle e_b$};
	\node at (0,.2) {\red{$\scriptstyle ab$}};
	\node at (-1.2,.4) {\red{$\begin{tikzpicture}[scale=.5,baseline=.5cm]
		\node (2) at (0,1.2) {$\scriptstyle b$};
		\node (3) at (1.1,.5) {$\scriptstyle a$};
		\draw [thick,->] (.85,.5) .. controls (.55,.8) .. (.15,.9) ; 
		\end{tikzpicture} $}};
	\node at (1.2,.4) {\red{$\begin{tikzpicture}[scale=.5,baseline=.5cm]
		\node (2) at (0,1.2) {$\scriptstyle b$};
		\node (3) at (1.1,.5) {$\scriptstyle a$};
		\draw [thick,<-] (.85,.5) .. controls (.55,.8) .. (.15,.9) ; 
		\end{tikzpicture} $}};
	\draw [color=blue!40,thick] (-.8,0)--(.8,0) ; 
\end{tikzpicture}\qquad&\qquad
\begin{tikzpicture}[scale=1,baseline=.0cm]
	\node (a) at (-.95,-.6) {$\scriptstyle e_a$};
	\node (b) at (1.05,-.6) {$\scriptstyle e_b$};
	\node (c) at (0,1.2) {$\scriptstyle e_c$};
	\node at (0,-.9) {\red{$\begin{tikzpicture}[scale=.5,baseline=.5cm]
		\node (1) at (0,0) {$\scriptstyle c$};
		\node (3) at (.5,1) {$\scriptstyle ab$};
		\draw [thick,->] (.5,.7) --(.1,.2) ; 
		\end{tikzpicture} $}};
	\node at (.85,.4) {\red{$\begin{tikzpicture}[scale=.5,baseline=.5cm]
		\node (1) at (-.1,.6) {$\scriptstyle bc$};
		\node (3) at (1,.5) {$\scriptstyle a$};
		\draw [thick,->] (.2,.5) -- (.8,.5) ; 
		\end{tikzpicture} $}};
	\node at (-.85,.5) {\red{$\begin{tikzpicture}[scale=.5,baseline=.5cm]
		\node (1) at (.5,0) {$\scriptstyle ac$};
		\node (3) at (.1,1) {$\scriptstyle b$};
		\draw [thick,->] (.5,.1) -- (.2,.7) ; 
		\end{tikzpicture} $}};
	\node at (0,1.7) {\red{$\begin{tikzpicture}[scale=.5,baseline=.5cm]
		\node (1) at (-.1,0) {$\scriptstyle c$};
		\node (2) at (0,1.2) {$\scriptstyle b$};
		\node (3) at (1.1,.5) {$\scriptstyle a$};
		\draw [thick,->] (.1,.1) .. controls (.25,.5) .. (.15,.9) ; 
		\draw [thick,->] (.1,.1) .. controls (.5,.4) .. (.85,.5) ; 
		\end{tikzpicture} $}};
	\node at (1.6,-1.2) {\red{$\begin{tikzpicture}[scale=.5,baseline=.5cm]
		\node (1) at (-.1,0) {$\scriptstyle c$};
		\node (2) at (0,1.2) {$\scriptstyle b$};
		\node (3) at (1.1,.5) {$\scriptstyle a$};
		\draw [thick,->] (.15,.9) .. controls (.4,.5) .. (.85,.5) ; 
		\draw [thick,->] (.15,.9) .. controls (.25,.5) .. (.1,.1) ; 
		\end{tikzpicture} $}};
	\node at (-1.4,-1) {\red{$\begin{tikzpicture}[scale=.5,baseline=.5cm]
		\node (1) at (-.1,0) {$\scriptstyle c$};
		\node (2) at (0,1.2) {$\scriptstyle b$};
		\node (3) at (1.1,.5) {$\scriptstyle a$};
		\draw [thick,->] (.85,.5) .. controls (.4,.5) .. (.15,.9) ; 
		\draw [thick,->] (.85,.5) .. controls (.5,.4) .. (.1,.1) ; 
		\end{tikzpicture} $}};
	\draw [color=blue!40,thick,fill=blue!20] (-.866,-.5)--(.866,-.5)--(0,1)--(-.866,-.5) ; 
	\node at (0,0) {\red{$\scriptstyle abc$}};
\end{tikzpicture}
\\
U_1=\{a,b\},&U_2=\{a,b,c\}.\\
\end{array}
$$
This allows to think of inequalities describing cones in terms of orientations.
For example \red{$\begin{tikzpicture}[scale=.5,baseline=.1cm]
		\node (1) at (-.1,.6) {$\scriptstyle bc$};
		\node (3) at (1.3,.6) {$\scriptstyle ad$};
		\draw [thick,->] (.2,.5) -- (.8,.5) ; 
		\end{tikzpicture} $} corresponds to \red{$x_b=x_c\ge x_a=x_d$}.
The interior of ${\bf \Delta}_U$ corresponds to the contraction of the hyperedge $U$.
\end{remark}
% Proposition 7.12 say that the Normal fan of sum is meet of the fans

We are now ready to state and prove the main theorem of this section.

%%%%%%%%%%%%%%%%%%%%%%%%%%%
\subsection{Main Theorem:}

Let $G\in{\bf HG}[n]$ and $P_G$ its hypergraphic polytope. We now show that the faces of $P_G$ are naturally
labeled by the acyclic orientations of the contractions $G/F$ for each flat $F$ of $G$. For that purpose we introduce some more notation. Let $\mathcal O\in {\mathfrak O}(G/F)$ and define the cone $C_{\mathcal O}$ by
 $$C_{\mathcal O} :=\Big\{\mathbf x \in ({\mathbb R}^n)^* \,\Big|\, {\red{x_a=x_b}\quad\text{\small if $a,b$ are identified in $[n]/{\mathcal O}$}
                                                                                 \atop \red{x_a\ge x_b} \quad\text{\small if $([a],[b])$ is an arrow of $(G/F)/{\mathcal O}$}\hfill }  \Big\}\,.$$
                                                                                 
\begin{remark}
It follows immediately from the definition of $C_{\mathcal{O}}$ that $\dim C_{\mathcal{O}} = |[n] / \mathcal{O}|$.
This equality is straightforward, but we will find it to be a useful fact. 
\end{remark}
                                           
\begin{remark}

The cones $C_{\O}$ are present in the cone-preposet dictionary of Postnikov, Reiner, and Williams~\cite{PRW}.
The relationship of our results with the cone-preposet dictionary is elaborated on in Section~\ref{S:polytope}.
Also in the computer science community the term \emph{weak ordering} is used to referred to such orientations.
\end{remark}

\begin{theorem}\label{THM:HypegraphicPolytope}
Given $G\in{\bf HG}[n]$, the normal fan ${\mathcal N}(P_G)$ of $P_G$ in $({\mathbb R}^n)^*$ is defined by the cones $C_{\mathcal O}$
where $\mathcal O$ runs over the set $\mathcal {AO}=\displaystyle \bigcup_{F\in Flats(G)}{\mathfrak O}(G/F)$.
%acyclic orientations of contractions of $G$ by flats. More precisely for $F\in Flats(G)$ and ${\mathcal O}\in {\mathfrak O}(G/F)$
%we have 
%  $$C_{\mathcal O} =\Big\{ X\in {\mathbb R}^n \,\Big|\, {\red{x_a=x_b}\quad\text{\small if $a,b$ are identified in $(G/F)/{\mathcal O}$}
%                                                                                 \atop \red{x_a\ge x_b} \quad\text{\small if $([a],[b])$ is an arrow of $(G/F)/{\mathcal O}$}\hfill }  \Big\}\,.$$
 In particular, the faces of $P_G$ are in one to one correspondence with the elements $\mathcal O\in\mathcal {AO}$.
 %   $\displaystyle \bigcup_{F\in Flats(G)}{\mathfrak O}(G/F)\,.$                                                            
\end{theorem}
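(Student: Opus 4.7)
The plan is to invoke the standard fact that the normal fan of a Minkowski sum is the common refinement of the normal fans of its summands. Applied to $P_G=\sum_{U\in G}{\bf \Delta}_U$, this gives
\[
\mathcal{N}(P_G)=\bigwedge_{U\in G}\mathcal{N}({\bf \Delta}_U),
\]
and by Lemma~\ref{lem:fansimplex}, the cones of each $\mathcal{N}({\bf \Delta}_U)$ are exactly $\{C_{K,U}:\emptyset\ne K\subseteq U\}$. Every cone of $\mathcal{N}(P_G)$ therefore has the form $C_{\{K_U\}}:=\bigcap_{U\in G}C_{K_U,U}$ for some choice $\{K_U\}_{U\in G}$ with $\emptyset\ne K_U\subseteq U$, and the theorem reduces to matching these intersection cones bijectively with the elements of $\mathcal{AO}$.

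For the forward direction, I would associate to $(F,\mathcal{O})\in\mathcal{AO}$ the canonical choice $K_U=U$ if $U\in F$, and otherwise $K_U=\{a\in U:[a]\in\mathfrak{a}\}$, where $(\mathfrak{a},\mathfrak{b})$ is the orientation of the image of $U$ in $G/F$ under $\mathcal{O}$. Comparing the defining constraints of $C_{K_U,U}$ given by Lemma~\ref{lem:fansimplex} with those of $C_{\mathcal O}$ is then a direct check: the equalities $x_a=x_b$ enforced by $a,b$ lying in a common $K_U$ generate by transitive closure precisely the set partition $V/\mathcal{O}$, while the remaining inequalities $x_a\ge x_b$ record exactly the arrows $([a],[b])$ of $(G/F)/\mathcal{O}$. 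Hence $C_{\{K_U\}}=C_{\mathcal{O}}$.

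Conversely, given a nonempty cone $C=C_{\{K_U\}}$ in the refinement, let $\mathcal{P}_C$ be the set partition of $V$ consisting of the equalities $x_a=x_b$ that are forced by membership in $C$, and set $F:=G\big|_{\mathcal{P}_C}$, which is a flat by definition. For each $U\notin F$ the values $\{x_a:a\in U\}$ attain their maximum on a proper subset contained in a single block of $\mathcal{P}_C$; this defines the head of an orientation of the corresponding hyperedge of $G/F$, and piecing these together yields an orientation $\mathcal{O}$ of $G/F$. Acyclicity of $\mathcal{O}$ is forced by the nonemptiness of $C$: a directed cycle $[v_0]\to\cdots\to[v_r]=[v_0]$ of distinct vertices in $(G/F)/\mathcal{O}$ would produce $x_{v_0}\ge\cdots\ge x_{v_r}=x_{v_0}$, hence equality throughout, collapsing the supposedly distinct blocks. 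The main technical obstacle is then to verify that the two maps are mutually inverse; this reduces to matching the partition $V/\mathcal{O}$ attached to $(F,\mathcal{O})$ with the partition $\mathcal{P}_{C_{\mathcal O}}$ of forced equalities, which follows from the characterization of $V/F$ from Section~\ref{SS:hypergraphHopf} as the finest set partition with $F=G\big|_{V/F}$, together with the description of $V/\mathcal{O}$ as obtained from $V/F$ by merging blocks that share a head of $\mathcal{O}$.
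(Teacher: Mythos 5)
Your proposal is correct and shares the paper's skeleton: both reduce to the common refinement $\mathcal{N}(P_G)=\bigwedge_{U\in G}\mathcal{N}({\bf \Delta}_U)$ via Lemma~\ref{lem:fansimplex} and then match the intersection cones $\bigcap_{U}C_{K_U,U}$ with the orientations in $\mathcal{AO}$. The difference is in how the matching is executed. For the forward direction the paper routes through Lemma~\ref{lem:Omega}, taking $A=\Psi(\mathcal{O})$ and setting $K(U)=U\cap A_i$ for the first part meeting $U$, whereas you read $K_U$ directly off the heads of $\mathcal{O}$; these coincide (acyclicity forbids a tail class of $\overline{U}$ from lying in the same block of $[n]/\mathcal{O}$ as the head, since that would create a loop in $(G/F)/\mathcal{O}$), so this is only cosmetic. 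The real divergence is the converse: the paper runs an iterative algorithm that alternately contracts hyperedges with $K(U)=U$ and resolves directed cycles, with termination guaranteed because the sets $K(U)$ only grow, while you build the answer in one shot from the partition $\mathcal{P}_C$ of equalities forced on $C$. Your route is more conceptual and avoids the termination argument, but it front-loads the work into two claims you only gesture at: (i) for $U\notin F$ the locus where the maximum over $U$ is attained is $U\cap B$ for a single block $B$ of $\mathcal{P}_C$, and its image in $V/F$ is a \emph{proper nonempty} subset of $\overline{U}$ (properness needs the observation that the $V/F$-class of a vertex of $U\setminus B$ cannot meet $B$, because $F$-connectivity stays inside blocks of $\mathcal{P}_C$); and (ii) $[n]/\mathcal{O}=\mathcal{P}_C$, i.e.\ every forced equality of $C$ is realized either inside a head or inside a contracted hyperedge, which is what makes your acyclicity argument and the mutual-inverse check go through. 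Both claims are true and provable by chasing the generators of $\mathcal{P}_C$ (pairs inside some $K_U$, together with cycles of inequalities), so the proof is sound once they are written out; they play exactly the role of steps (3) and (5) of the paper's algorithm.
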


\begin{proof} 
First we show that for a given $F\in Flats(G)$ and $\mathcal O\in{\mathfrak O}(G/F)$, the cone $C_{\mathcal O}$ is a cone in ${\mathcal N}(P_G)$. Since $P_G=\sum_{U\in G} {\bf \Delta}_U$, Proposition 7.12 of~\cite{Z95} 
tells us that
$ {\mathcal N}(P_G)=\bigwedge_{U\in G} {\mathcal N}({\bf \Delta}_U)$.
Here $\bigwedge$ denotes the common refinement of fans.
 
Let  $A=(A_1,\ldots,A_k)=\Psi({\mathcal O})$
be given by Lemma~\ref{lem:Omega}.
For any $U\in G$ define $K(U)$ to be $K(U)= U\cap A_i$ where $i$ is the minimal index with the property that $U \cap A_i \neq\emptyset$. 
%Thus $K(U)$ is the head of the orientation of the hyperedge $U$ in $\mathcal O$. 
When $K(U)\ne U$ it is the head of an edge of $\mathcal O$, when $K(U)=U$ the hyperedge $U$ is contracted in $G/F$.
We have that 
	$$ C_{\mathcal O}=\bigcap_{U\in G} C_{K(U), U}$$
	is a cone of ${\mathcal N}(P_G)$.
 
%Conversely, suppose  $C$ is a cone of  $ {\mathcal N}(P_G)$. Using Lemma~\ref{lem:fansimplex} 
%  $$C=\bigcap_{U\in G} C_{K(U)}$$ for some $\emptyset\ne K(U)\subseteq U$.
 %We have that either $K(U)\subset U$ or $K(U)=U$.
%In the case $K(U)\subset U$, we get an orientation of $U$, and in the case $K(U)=U$, we have that $U$ is contracted. 
%The sequence of contractions defines a single flat $F'=G\big|_A$, given by any set composition $A=(A_1,A_2,\ldots,A_r)$ corresponding to the equivalence classes of the $U$'s that are contracted.
%Now, we have an orientation 
%     $${\mathcal O}'=\big\{ \big(K(U),U\setminus K(U)\big) \,\big|\, U\in G,\, K(U)\ne U  \big\}$$      of $G/F'$, potentially with loops. But each cycle $A_{i_1}\to A_{i_2}\to \cdots\to A_{i_s}=A_{i_1}$ implies the following sequence of inequalities on $C$
%\begin{equation}\label{eq:Ccycles} x_{a_1}=x_{a_2}=\cdots=x_{a_{m_1}}\ge x_{a_{m_1+1}}=\cdots= x_{a_{m_2}}\ge \cdots \ge x_{a_{m_{s-1}+1}}=x_{a_1},
%\end{equation} where $A_{i_1}=\{a_1,a_2,\ldots,a_{m_1}\}$, $A_2=\{ a_{m_1+1},\ldots,a_{m_2} \}$, $\ldots$, $A_{i_{s}}=A_{1_1}$. Which means that in $C$ we must have equality of all the coordinates in~\eqref{eq:Ccycles}. Let $B=(B_1,B_1,\ldots,B_m)$ be any set composition obtained from $A$ by merging together any cycle of $(G/F')/{\mathcal O}'$.  The set composition $B$ has the property that for $U\in G$ we have $U\subseteq B_i$ for some $1\le i\le m$  if and only if  $C\subseteq  C_{U}.$

For the converse, let $C=\bigcap_{U\in G} C_{K(U), U}$ be a cone in $\mathcal N(P_G)$.
Now $K(U)$ is some arbitrary nonempty subset of $U$ rather than the particular subset from the first part of this proof.
In this manner, we can think of such $C$ as a family $\{(U,K(U))\; : \;U\in G\}$. This description of $C$ is not unique. We will construct via the following algorithm an orientation $\mathcal O\in\mathcal {AO}$ such that $C=C_{\mathcal O}$.

\begin{enumerate}   
\item[{\bf (1)}] (input) A family $\{(U,K(U))\; : \;U\in G\}$ such that $C=\bigcap_{U\in G} C_{K(U), U}$.
\item[{\bf (2)}] (construct flat $F$) In the above description contract every hyperedge $U$ such that  $K(U)=U$. This defines a flat $F$ of $G$ which contains all hyperedges $U$ such that $K(U)=U$.
\item[{\bf (3)}] For every subset $A\subseteq [n]$, let $\overline A$ denote the image of $A$ in $[n]/F$. If there is $U$ for which $K(U)\neq U$ and $\overline{K(U)}=\overline U$ then set $K(U)=U$ and go back to (2).  
\item[{\bf (4)}] (define orientation of $G/F$) At this step, for each $U$ such that $|\overline U|>1$,  we have that $\overline{K(U)}\neq\overline U$. These $\overline{K(U)}$ define an orientation
${\mathcal O}$ of $G/F$.
\item[{\bf (5)}] (resolve cycles) 
If $(G/F)/{\mathcal O}$ has a cycle $A_{i_1}\to A_{i_2}\to \cdots\to A_{i_s}=A_{i_1}$ where $A_{i_j}=\{a_{j,1},a_{j,2},\ldots,a_{j,m_j}\}$, then we have the following relations in $C$
   $$x_{a_{1,1}}=x_{a_{1,2}}=\cdots=x_{a_{1,m_1}}\ge x_{a_{2,1}}=\cdots= x_{a_{2,m_2}}\ge \cdots \ge x_{a_{s,1}}=x_{a_{1,1}}.$$
This implies that all the coordinates indexed by $B=A_{i_1}\cup A_{i_2}\cup \cdots\cup A_{i_s}$ are equal in $C$. Set $K(U)=K(U)\cup(B\cap U)$ whenever $K(U)\cap B\neq\emptyset$. Go back to step (2).
\item[{\bf (6)}] (output $\mathcal O$) The orientation $\mathcal O$ of $G/F$ which is acyclic and $C=C_{\mathcal O}$.
\end{enumerate} 
To finish the proof we notice that the algorithm stops and that at all steps $C=\bigcap_{U\in G} C_{K(U),U}$. This follows since in the algorithm the family $\{(U,K(U))\; : \;U\in G\}$ is modified only in steps (3) and (5).
Each modification only increases the sets $K(U)$ for some $U$. Since $G$ is finite, the algorithm must stop. When it stops, the orientation $\mathcal O$ has no cycles, thanks to (5) where any edge that is part of a cycle is contracted to a single point.
On the other hand in the starting point, the sets $K(U)$ give us that 
  $$i,j\in K(U) \text{ for some } U \quad\implies\quad x_i=x_j \text { in }C.$$
The equivalence relation $[n]/F$ in step (3) is such that if $i\sim j$ in $[n]/F$, then $x_i=x_j$ in $C$.
Hence, in step (3), if $\overline{K(U)}=\overline U$,  for all $i,j\in U$ we have $x_i=x_j$ in $C$. This implies that if we redefine $K(U)=U$ we do not change the cone $C$.
Similarly, in step (5), we have shown that for any $i,j\in B$ we have $x_i=x_j$ in $C$. Hence if $K(U)\cap B\neq\emptyset$, redefining $K(U)=K(U)\cup(B\cap U)$ does not change $C$.
We have shown that the algorithm preserves the cone $C$ and produces the desired orientation.
\end{proof}

\begin{example} \label{ex:NPG1} Consider the hypergraph $G$ in Example~\ref{ex:PG1} and $P_G=\blue{{\bf \Delta}_{123}}+ \red{{\bf \Delta}_{23}}$.
The normal fan of $P_G$ has 9 cones. It is the common refinement of the normal fans of $\blue{{\bf \Delta}_{123}}$ and $\red{{\bf \Delta}_{23}}$.
 $$ \begin{tikzpicture}[scale=1,baseline=.5cm]
	\draw [color=blue!40, fill=blue!20] (0,0) -- (.6,.75) -- (1.8,.75) -- (2.4,0) --(0,0) ; 
	\draw [color=red!40] (1.2,.1)--(1.95,.7);
	\draw [color=red!40] (1.2,.1)--(.45,.7);
	\draw [color=red!40] (1.2,1)--(1.2,-.25);
	\draw [color=red] (1.2,.1)--(1.2,-.25);
	\node at (1.2,.1) {\red{$\scriptstyle \bullet$}};
	\node at (1.2,-.4) {\red{$\scriptstyle C$}};
\end{tikzpicture}
$$

Take the cone $C=\{X\mid x_2=x_3>x_1\}$ of $\mathcal N(P_G)$.
It can be obtained as an intersection given by $\{(U,K(U)):U\in G\}$ in different ways.
We can consider inputting the family $\big\{\blue{(\{1,2,3\},\{2,3\})},\red{(\{2,3\},\{2,3\})}\big\}$ describing $C$ into the algorithm from the proof.
In step (2) the algorithm will construct the flat consisting of the hyperedge $\{2,3\}$ and then will output the acyclic orientation
$\begin{tikzpicture}[scale=.5,baseline=.3cm]
		\node (1) at (0,.1) {$\scriptstyle 1$};
		\node (3) at (.5,1) {$\scriptstyle 23$};
		\blue{\draw [->] (.5,.7) --(.1,.2) ; }
		\end{tikzpicture} $.
Instead, if we start the algorithm the family $\big\{\blue{(\{1,2,3\},\{3\})},\red{(\{2,3\},\{2\})}\big\}$ describing the same $C$, we will construct the empty flat in step (2) and step (5) gives us 
$$\begin{tikzpicture}[scale=.5,baseline=.3cm]
		\node (1) at (0,.1) {$\scriptstyle 1$};
		\node (2) at (0,1.2) {$\scriptstyle 3$};
		\node (3) at (1,.5) {$\scriptstyle 2$};
		\blue{\draw [->] (.15,.9) .. controls (.4,.5) .. (.85,.5) ; 
		\draw [->] (.15,.9) .. controls (.25,.5) .. (.1,.1) ; }
		\end{tikzpicture} \bigcap 
\begin{tikzpicture}[scale=.5,baseline=.3cm]
		\node (2) at (0,1.2) {$\scriptstyle 3$};
		\node (3) at (1,.5) {$\scriptstyle 2$};
		\red{\draw [->] (.85,.5) .. controls (.55,.8) .. (.15,.9) ; }
		\end{tikzpicture}  = 
\begin{tikzpicture}[scale=.5,baseline=.3cm]
		\node (1) at (0,.1) {$\scriptstyle 1$};
		\node (2) at (0,1.2) {$\scriptstyle 3$};
		\node (3) at (1,.5) {$\scriptstyle 2$};
		\blue{\draw [->] (.15,.9) .. controls (.4,.5) .. (.85,.5) ; 
		\draw [->] (.15,.9) .. controls (.25,.5) .. (.1,.1) ; }
		\red{\draw [->] (.85,.5) .. controls (.55,.8) .. (.15,.9) ; }
		\end{tikzpicture}
$$ 
which is not acyclic.
This orientation has the cycle $2\to 3\to 2$ and we detect that $x_2=x_3$ in $C$. 
We then set $B=\{2,3\}$ and redefine the family.
After going through one more iteration the algorithm will again give us the acyclic orientation $\begin{tikzpicture}[scale=.5,baseline=.3cm]
		\node (1) at (0,.1) {$\scriptstyle 1$};
		\node (3) at (.5,1) {$\scriptstyle 23$};
		\blue{\draw [->] (.5,.7) --(.1,.2) ; }
		\end{tikzpicture} $.
\end{example}

%\begin{remark} The polytope is a generalized permutahedron~\cite{Pos09}.
%In particular Postnikov gave a very nice formula for the volume of such polytope that can be interpreted as sum over generalized spanning hyperforest.\john{Is this remark still relevant? Should it be here? We do discuss a volume formula in Section~\ref{sec:PS}.}
%\end{remark}

We are now ready to connect this back with the antipode formula. If we look again at Theorem~\ref{thm:bb16} we notice that
the antipode formula is a sum over orientations.

\begin{corollary} \label{cor:geom_antipode}
For a hypergraph $G\in{\bf HG}[n]$, the coefficient of a flat $F\in Flats(G)$ in $S(G)$ is $a(G/F)$.
We have that $(-1)^na(G/F)$ is the Euler characteristic of the union of the faces of $P_G$ indexed by the acyclic orientations of $G/F$.
\end{corollary}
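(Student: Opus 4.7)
The first assertion is exactly Theorem~\ref{thm:bb16}, so all the content lies in the second. My plan is to use Theorem~\ref{THM:HypegraphicPolytope} to convert the statistic $|\Psi(\mathcal{O})|$ in the definition of $a(G/F)$ into the codimension of the corresponding face of $P_G$, and then to recognize the resulting alternating sum as an Euler characteristic.

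Concretely, Theorem~\ref{THM:HypegraphicPolytope} associates to each $\mathcal{O}\in\mathfrak{O}(G/F)$ a face $f_\mathcal{O}$ of $P_G$ with outer normal cone $C_\mathcal{O}$. The remark following the definition of $C_\mathcal{O}$ gives $\dim C_\mathcal{O}=|[n]/\mathcal{O}|$, and the last assertion of Lemma~\ref{lem:Omega} shows that this equals $|\Psi(\mathcal{O})|$. Since $P_G$ lies in the hyperplane $\sum_i x_i=|G|$ and every $C_\mathcal{O}$ contains the lineality direction $(1,\ldots,1)$, the standard face/normal-cone duality gives $\dim f_\mathcal{O}+\dim C_\mathcal{O}=n$; hence $\dim f_\mathcal{O}=n-|\Psi(\mathcal{O})|$ and $(-1)^{|\Psi(\mathcal{O})|}=(-1)^{n}(-1)^{\dim f_\mathcal{O}}$. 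Summing over $\mathcal{O}\in\mathfrak{O}(G/F)$ and invoking Theorem~\ref{thm:bb16} yields
\[(-1)^{n}a(G/F)=\sum_{\mathcal{O}\in\mathfrak{O}(G/F)}(-1)^{\dim f_\mathcal{O}}.\]

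The remaining step, reading the right-hand side as an Euler characteristic, is the main subtlety I anticipate. The union $X_F:=\bigcup_{\mathcal{O}\in\mathfrak{O}(G/F)}f_\mathcal{O}$ is in general \emph{not} a subcomplex of the face lattice of $P_G$, because the boundary of a face indexed by $\mathfrak{O}(G/F)$ typically meets faces attached to strictly finer flats; consequently, the ordinary topological Euler characteristic of the closed set $X_F$ need not equal the alternating sum above. The natural formulation is to view $X_F$ as the disjoint union of the relatively open cells $\operatorname{relint}(f_\mathcal{O})$ for $\mathcal{O}\in\mathfrak{O}(G/F)$, so that each contributes $(-1)^{\dim f_\mathcal{O}}$ to the compactly supported Euler characteristic; additivity of $\chi_c$ over disjoint unions then makes the identity immediate. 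As a sanity check, the collections $\mathfrak{O}(G/F)$ for $F\in Flats(G)$ partition the face stratification of $P_G$, so summing the identity over all $F$ recovers $\chi_c(P_G)=\sum_{f}(-1)^{\dim f}=1$.
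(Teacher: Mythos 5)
Your proposal is correct and follows essentially the same route as the paper: the paper's proof is a one-line application of Theorem~\ref{thm:bb16} together with the face/orientation correspondence of Theorem~\ref{THM:HypegraphicPolytope}, which is exactly your chain $|\Psi(\mathcal{O})|=|[n]/\mathcal{O}|=\dim C_{\mathcal{O}}=n-\dim f_{\mathcal{O}}$. The one place where you go beyond the paper is worth keeping: you are right that for $F\neq\emptyset$ the union of the closed faces indexed by $\mathfrak{O}(G/F)$ is not a subcomplex (e.g.\ for $G$ of Example~\ref{ex:PG1} and $F=\{\{2,3\}\}$ the union is two closed edges of the pentagon, whose topological Euler characteristic is $+2$ while $(-1)^3a(G/F)=-2$), so the statement must be read as the alternating sum $\sum_{\mathcal{O}}(-1)^{\dim f_{\mathcal{O}}}$, i.e.\ the compactly supported Euler characteristic of the union of relative interiors; the paper only addresses this implicitly, in the remark following the corollary, by passing to the polytopal complex attached to $G/F$.
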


\begin{proof}
  This is a direct application of Theorem~\ref{THM:HypegraphicPolytope}. For a fixed flat $F\in{\bf HG}[n]$, Theorem~\ref{thm:bb16}
  tells us that
  $$ a(G/F)=\sum_{{\mathcal O} \in{\mathfrak O}(G/F)} (-1)^{|\Psi({\mathcal O})|}\,.$$
  This is up to a sign the Euler characteristic of the union of the faces of $P_G$ indexed by acyclic orientations of $G/F$.
  \end{proof}
  
   \begin{remark} For $G\in{\bf HG}[n]$ and $F=\emptyset$, we have $G/F=G$. Then the coefficient of $F$ in $S(G)$ is $(-1)^na(G)$ which is the Euler characteristic of a polytopal complex.
  This follows from the fact that if $A\vdash[n]$ is such that $\Omega(A)\in{\mathfrak O}(G)$, then for any refinement $B\le A$, we have $\Omega(B)\in{\mathfrak O}(G)$.
For any $F \in Flats(G)$, the coefficient of $F$ in $S(G)$ is the coefficient of $\emptyset$ in $S(G/F)$.
So, this coefficient of the antipode can be thought of in terms of the Euler characteristic of a polytopal complex where the polytopal complex may live in a smaller dimensional ambient space. 
Now, the full  antipode of the hypergraph $G\in{\bf HG}[n]$ can be thought of as a refinement of the Euler characteristic of $P_G.$
The Euler characteristic of $P_G$ is simply
\[\chi(P_G) = \sum_{f \subseteq P_G} (-1)^{\dim f} = 1\]
where the sum is over all faces of $P_G.$
The antipode formula is
\[S(G) = (-1)^n \sum_{f \subseteq P_G} (-1)^{\dim f} G_f \]
where the sum again runs over all faces of $P_G$ and $G_f$ denotes the flat of $G$ corresponding to the face $f$.
\end{remark}

\begin{example}\label{ex:SPG1}
For $G=
\begin{tikzpicture}[scale=.7,baseline=.2cm]
	\node (1) at (0,.1) {$\scriptstyle 1$};
	\node (2) at (0,1) {$\scriptstyle 3$};
	\node (3) at (1,.5) {$\scriptstyle 2$};
	\draw [fill=blue!40] (.1,.1) .. controls (.25,.5) .. (.15,.9) .. controls (.4,.5) .. (.85,.5) .. controls (.5,.4) .. (.1,.1) ; 
	\draw [color=red!60,thick] (.9,.7).. controls (0.6,1) ..(0.2,1);
\end{tikzpicture} 
$ as in Example~\ref{ex:PG1}, the flats of $G$ are $G, \{\{2,3\}\}, \emptyset$. The coefficient of each flat $F$ in $S(G)$ is given by the Euler characteristic of the faces of $P_G=\blue{{\bf \Delta}_{123}}+ \red{{\bf \Delta}_{23}}$ indexed by acyclic orientations of $G/F$:
$$\begin{array}{cccc}
&\begin{tikzpicture}[scale=1,baseline=.5cm]
	\draw [fill=gray!10,dotted ] (0,0) -- (.6,.75) -- (1.8,.75) -- (2.4,0) --(0,0) ; 
	\draw [fill=green!80!black] (.4,.2) -- (.7,.575) -- (1.7,.575) -- (2,.2) --(.4,.2) ; 
	\node at (1.2,.375) {$\scriptscriptstyle 123$};
\end{tikzpicture}  \quad &
 \begin{tikzpicture}[scale=1,baseline=.5cm]
	\draw [fill=gray!10,dotted ] (0,0) -- (.6,.75) -- (1.8,.75) -- (2.4,0) --(0,0) ; 
	\draw [thick,color=green!60!black] (0.1,0) --(2.3,0); 
	\draw [thick,color=green!60!black] (.7,.75) -- (1.7,.75); 
		\node at (1.2,-.4) {
	     $\begin{tikzpicture}[scale=.5,baseline=.5cm]
		\node (1) at (0,.1) {$\scriptstyle 1$};
		\node (3) at (.5,1) {$\scriptstyle 23$};
		\blue{\draw [->] (.5,.7) --(.1,.2) ; }
		\end{tikzpicture} $};
	\node at (1.2,1.2) {
	     $\begin{tikzpicture}[scale=.5,baseline=.5cm]
		\node (1) at (0,.1) {$\scriptstyle 1$};
		\node (3) at (.5,1) {$\scriptstyle 23$};
		\blue{\draw [<-] (.5,.7) --(.1,.2) ; }
		\end{tikzpicture} $};
\end{tikzpicture} \quad &
\begin{tikzpicture}[scale=1,baseline=.5cm]
	\draw [fill=gray!10,dotted ] (0,0) -- (.6,.75) -- (1.8,.75) -- (2.4,0) --(0,0) ; 
	\node at (0,0) {\textcolor{green!60!black}{$\scriptstyle \bullet$}};
	\node at (.6,.75) {\textcolor{green!60!black}{$\scriptstyle \bullet$}};
	\node at (1.8,.75) {\textcolor{green!60!black}{$\scriptstyle \bullet$}};
	\node at (2.4,0) {\textcolor{green!60!black}{$\scriptstyle \bullet$}};	
	\node (1) at (0.5,1.0) {
	     $\begin{tikzpicture}[scale=.5,baseline=.5cm]
		\node (1) at (0,.1) {$\scriptstyle 1$};
		\node (2) at (0,1.2) {$\scriptstyle 3$};
		\node (3) at (1,.5) {$\scriptstyle 2$};
		\blue{\draw [->] (.1,.1) .. controls (.25,.5) .. (.15,.9) ; 
		\draw [->] (.1,.1) .. controls (.5,.4) .. (.85,.5) ; }
		\red{\draw [->] (.85,.5) .. controls (.55,.8) .. (.25,.9) ; }
		\end{tikzpicture} $};
	\node  at (2.3,1.1) {
	     $\begin{tikzpicture}[scale=.5,baseline=.5cm]
		\node (1) at (0,.1) {$\scriptstyle 1$};
		\node (2) at (0,1.2) {$\scriptstyle 3$};
		\node (3) at (1,.5) {$\scriptstyle 2$};
		\blue{\draw [->] (.1,.1) .. controls (.25,.5) .. (.15,.9) ; 
		\draw [->] (.1,.1) .. controls (.5,.4) .. (.85,.5) ; }
		\red{\draw [<-] (.85,.6) .. controls (.55,.8) .. (.15,.9) ; }
		\end{tikzpicture} $};
	\node (2) at (-.2,-.2) {
	     $\begin{tikzpicture}[scale=.5,baseline=.5cm]
		\node (1) at (0,.1) {$\scriptstyle 1$};
		\node (2) at (0,1.2) {$\scriptstyle 3$};
		\node (3) at (1,.5) {$\scriptstyle 2$};
		\blue{\draw [->] (.85,.5) .. controls (.4,.5) .. (.15,.9) ; 
		\draw [->] (.85,.5) .. controls (.5,.4) .. (.1,.1) ; }
		\red{\draw [->] (.85,.5) .. controls (.55,.8) .. (.25,.9) ; }
		\end{tikzpicture} $};
	\node (3) at (2.8,-.3) {
	     $\begin{tikzpicture}[scale=.5,baseline=.5cm]
		\node (1) at (0,.1) {$\scriptstyle 1$};
		\node (2) at (0,1.2) {$\scriptstyle 3$};
		\node (3) at (1,.5) {$\scriptstyle 2$};
		\blue{\draw [->] (.15,.9) .. controls (.4,.5) .. (.85,.5) ; 
		\draw [->] (.15,.9) .. controls (.25,.5) .. (.1,.1) ; }
		\red{\draw [<-] (.85,.6) .. controls (.55,.8) .. (.15,.9) ; }
		\end{tikzpicture} $};
	\node at (2.6,.5) {
	     $\begin{tikzpicture}[scale=.5,baseline=.5cm]
		\node (1) at (0,.5) {$\scriptstyle 13$};
		\node (3) at (1,.5) {$\scriptstyle 2$};
		\blue{\draw [->] (.2,.5) -- (.8,.5) ; }
		\red{\draw [->] (.2,.5) .. controls (.55,.8) .. (.8,.6) ; }
		\end{tikzpicture} $};
	\node at (-.1,.6) {
	     $\begin{tikzpicture}[scale=.5,baseline=.5cm]
		\node (1) at (.5,.1) {$\scriptstyle 12$};
		\node (3) at (.1,1) {$\scriptstyle 3$};
		\blue{\draw [->] (.5,.1) -- (.2,.7) ; }
		\red{\draw [->] (.5,.1) .. controls (.5,.6) .. (.3,.7) ; }
		\end{tikzpicture} $};
	\draw [thick,color=green!60!black] (0,0) -- (.6,.75); 
	\draw [thick,color=green!60!black] (1.8,.75) -- (2.4,0); 
\end{tikzpicture}\\
S(G)=&\textcolor{green!60!black}{-1} \cdot \Big[
\begin{tikzpicture}[scale=.7,baseline=.2cm]
	\node (1) at (0,.1) {$\scriptstyle 1$};
	\node (2) at (0,1) {$\scriptstyle 3$};
	\node (3) at (1,.5) {$\scriptstyle 2$};
	\draw [fill=blue!40] (.1,.1) .. controls (.25,.5) .. (.15,.9) .. controls (.4,.5) .. (.85,.5) .. controls (.5,.4) .. (.1,.1) ; 
	\draw [color=red!80,thick] (.9,.7).. controls (0.6,1) ..(0.2,1);
\end{tikzpicture} \Big] 
& \textcolor{green!60!black}{+\quad2} \cdot \Big[\begin{tikzpicture}[scale=.7,baseline=.2cm]
	\node (1) at (0,.1) {$\scriptstyle 1$};
	\node (2) at (0,1) {$\scriptstyle 3$};
	\node (3) at (1,.5) {$\scriptstyle 2$};
	\draw [color=red!80,thick] (.9,.7).. controls (0.6,1) ..(0.2,1);
\end{tikzpicture} \Big]  &  \textcolor{green!60!black}{-\quad2} \cdot \Big[ \begin{tikzpicture}[scale=.7,baseline=.2cm]
	\node (1) at (0,.1) {$\scriptstyle 1$};
	\node (2) at (0,1) {$\scriptstyle 3$};
	\node (3) at (1,.5) {$\scriptstyle 2$};
\end{tikzpicture} \Big] \qquad \\
\end{array}
$$
\end{example}

\begin{example}\label{ex:SPG2} For the hypergraph
$G'=
\begin{tikzpicture}[scale=.7,baseline=.2cm]
	\node (1) at (0,.1) {$\scriptstyle 1$};
	\node (2) at (2.5,.5) {$\scriptstyle 4$};
	\node (3) at (0,1) {$\scriptstyle 2$};
	\node (4) at (1,.5) {$\scriptstyle 3$};
	\draw [fill=blue!40] (.1,.1) .. controls (.25,.5) .. (.15,.9) .. controls (.4,.5) .. (.85,.5) .. controls (.5,.4) .. (.1,.1) ; 
	\draw [thick,color=red!80] (4).. controls (1.8,.6) ..(2);
\end{tikzpicture} 
$ in Example~\ref{ex:PG2}, the flats are $G',\{\{3,4\}\},\{\{1,2,3\}\}$ and $\emptyset$. Thus
we  have that $S(G)$ is given by
$$\begin{array}{cccc}
\begin{tikzpicture}[scale=1,baseline=.5cm]
	\draw [fill=gray!10,dotted ] (0,0) -- (.6,.75)-- (1.6,1.25) -- (2.2,.5) -- (1.2,0) --(0,0) ; 
	\draw [dotted] (0,0)--(1,.5);
	\draw [dotted] (1.6,1.25)--(1,.5)--(2.2,.5);
	\draw [color=green!60!black,fill=green!60!black ] (.32,.125) -- (.74,.65)-- (1.16,.1250) --(.32,.125) ; 
	\draw [color=green!60!black,fill=green!60!black ]  (.74,.65)--(1.54,1.05)-- (1.96,.5250)-- (1.16,.1250)--(.74,.65) ; 
	\draw [densely dotted,color=green!40!black]  (1.16,.1250)--(.74,.65) ; 
	\draw [dotted]  (1.5,1.05)--(1.15,.525)--(1.92,.5250);
	\draw [dotted] (.6,.75)--(1.2,0);
\end{tikzpicture} \quad &
\begin{tikzpicture}[scale=1,baseline=.5cm]
	\draw [fill=gray!10,dotted ] (0,0) -- (.6,.75)-- (1.6,1.25) -- (2.2,.5) -- (1.2,0) --(0,0) ; 
	\draw [dotted] (0,0)--(1,.5);
	\draw [dotted] (1.6,1.25)--(1,.5)--(2.2,.5);
	\draw [color=green!60!black,fill=green!60!black] (.7,.8)-- (1.5,1.2) -- (2.1,.45) -- (1.3,0.05) --(.7,.8) ; 
	\draw [color=green!65!black,fill=green!65!black] (.1,0.05) -- (.7,.8) -- (1.3,0.05)--(.1,0.05) ; 
	\draw [thick,color=green!40!black] (.7,.8)-- (1.5,1.2); 
	\draw [thick,color=green!40!black] (2.1,.45) -- (1.3,0.05); 
	\draw [thick,color=green!40!black] (.1,0.05) -- (.92,0.46); 
	\draw [dotted] (.6,.75)--(1.2,0);
	\node (1) at (0,-.1) {
	     $\begin{tikzpicture}[scale=.3,baseline=.5cm]
		\blue{\draw [->] (.1,.1) -- (.15,.9) ; 
		\draw [->] (.1,.1) -- (.85,.5) ; }
		\end{tikzpicture} $};	
	\node (2) at (0.9,1.25) {
	     $\begin{tikzpicture}[scale=.3,baseline=.5cm]
		\blue{\draw [->]  (.15,.9) -- (.1,.1) ; 
		\draw [->] (.15,.9) -- (.85,.5) ; }
		\end{tikzpicture} $};	
	\node (12) at (0.1,.7) {
	     $\begin{tikzpicture}[scale=.3,baseline=.5cm]
		\blue{\draw [->] (.15,.5) -- (.85,.5) ; }
		\end{tikzpicture} $};	
	\node (3) at (1.8,.15) {
	     $\begin{tikzpicture}[scale=.3,baseline=.5cm]
		\blue{\draw [->]  (.85,.5) -- (.1,.1) ; 
		\draw [->] (.85,.5) --  (.15,.9); }
		\end{tikzpicture} $};	
	\node (13) at (.6,-.1) {
	     $\begin{tikzpicture}[scale=.3,baseline=.5cm]
		\blue{\draw [<-] (.5,.9) -- (.85,.5) ; }
		\end{tikzpicture} $};	
	\node (23) at (2.,1.2) {
	     $\begin{tikzpicture}[scale=.3,baseline=.5cm]
		\blue{\draw [<-] (.5,.1) -- (.85,.5) ; }
		\end{tikzpicture} $};	
\end{tikzpicture} \quad &
\begin{tikzpicture}[scale=1,baseline=.5cm]
	\draw [fill=gray!10,dotted ] (0,0) -- (.6,.75)-- (1.6,1.25) -- (2.2,.5) -- (1.2,0) --(0,0) ; 
	\draw [dotted] (0,0)--(1,.5);
	\draw [dotted] (1.6,1.25)--(1,.5)--(2.2,.5);
	\draw [dotted ](.6,.75)--(1.2,0);
	\draw [color=gray!10,fill=green!60!black] (.18,.075) -- (.6,.6)-- (1.02,.0750) --(.18,.075) ; 
	\draw [color=gray!10,fill=green!60!black ] (1.18,.575) -- (1.6,1.1)-- (2.02,.5750) --(1.18,.575) ; 
	\node (13) at (.5,0) {
	     $\begin{tikzpicture}[scale=.3,baseline=.5cm]
		\red{\draw [->] (.85,.5) -- (1.5,.5) ; }
		\end{tikzpicture} $};	
	\node at (2,1.3) {
	     $\begin{tikzpicture}[scale=.3,baseline=.5cm]
		\red{\draw [<-] (.85,.5) -- (1.5,.5) ; }
		\end{tikzpicture} $};	
\end{tikzpicture} \quad&
\begin{tikzpicture}[scale=1,baseline=.5cm]
	\draw [fill=gray!10,dotted ] (0,0) -- (.6,.75)-- (1.6,1.25) -- (2.2,.5) -- (1.2,0) --(0,0) ; 
	\draw [dotted] (0,0)--(1,.5);
	\draw [thick,color=green!60!black] (1.6,1.25)--(1,.5)--(2.2,.5);
	\draw [thick,color=green!60!black] (0,0) --(.6,.75)--(1.2,0)--(0,0);
	\draw [thick,color=green!60!black] (1.6,1.25) -- (2.2,.5);
	\node at (0,0) {\textcolor{green!60!black}{$\scriptstyle \bullet$}};
	\node at (.6,.75) {\textcolor{green!60!black}{$\scriptstyle \bullet$}};
	\node at (1.6,1.25) {\textcolor{green!60!black}{$\scriptstyle \bullet$}};
	\node at (2.2,.5) {\textcolor{green!60!black}{$\scriptstyle \bullet$}};	
	\node at (1.2,0) {\textcolor{green!60!black}{$\scriptstyle \bullet$}};	
	\node at (1,.5) {\textcolor{green!60!black}{$\scriptstyle \bullet$}};
	\node (1) at (-0.2,-.1) {
	     $\begin{tikzpicture}[scale=.3,baseline=.5cm]
		\blue{\draw [->] (.1,.1) -- (.15,.9) ; 
		\draw [->] (.1,.1) -- (.85,.5) ; }
		\red{\draw [->] (.85,.5) -- (1.5,.5) ; }
		\end{tikzpicture} $};	
	\node (2) at (0.5,1.1) {
	     $\begin{tikzpicture}[scale=.3,baseline=.5cm]
		\blue{\draw [->]  (.15,.9) -- (.1,.1) ; 
		\draw [->] (.15,.9) -- (.85,.5) ; }
		\red{\draw [->] (.85,.5) -- (1.5,.5) ; }
		\end{tikzpicture} $};	
	\node (12) at (0,.6) {
	     $\begin{tikzpicture}[scale=.3,baseline=.5cm]
		\blue{\draw [->] (.15,.5) -- (.85,.5) ; }
		\red{\draw [->] (.85,.5) -- (1.5,.5) ; }
		\end{tikzpicture} $};	
	\node (3) at (1.5,-.1) {
	     $\begin{tikzpicture}[scale=.3,baseline=.5cm]
		\blue{\draw [->]  (.85,.5) -- (.1,.1) ; 
		\draw [->] (.85,.5) --  (.15,.9); }
		\red{\draw [->] (.85,.5) -- (1.5,.5) ; }
		\end{tikzpicture} $};	
	\node (13) at (.5,-.1) {
	     $\begin{tikzpicture}[scale=.3,baseline=.5cm]
		\blue{\draw [<-] (.5,.9) -- (.85,.5) ; }
		\red{\draw [->] (.85,.5) -- (1.5,.5) ; }
		\end{tikzpicture} $};	
	\node (23) at (1.,.35) {
	     $\begin{tikzpicture}[scale=.3,baseline=.5cm]
		\blue{\draw [<-] (.5,.1) -- (.85,.5) ; }
		\red{\draw [->] (.85,.5) -- (1.5,.5) ; }
		\end{tikzpicture} $};	
	\node  at (1.3,.8) {
	     $\begin{tikzpicture}[scale=.3,baseline=.5cm]
		\blue{\draw [->] (.1,.1) -- (.15,.9) ; 
		\draw [->] (.1,.1) -- (.85,.5) ; }
		\red{\draw [<-] (.85,.5) -- (1.5,.5) ; }
		\end{tikzpicture} $};	
	\node  at (1.7,1.6) {
	     $\begin{tikzpicture}[scale=.3,baseline=.5cm]
		\blue{\draw [->]  (.15,.9) -- (.1,.1) ; 
		\draw [->] (.15,.9) -- (.85,.5) ; }
		\red{\draw [<-] (.85,.5) -- (1.5,.5) ; }
		\end{tikzpicture} $};	
	\node  at (1.4,1.15) {
	     $\begin{tikzpicture}[scale=.3,baseline=.5cm]
		\blue{\draw [->] (.15,.5) -- (.85,.5) ; }
		\red{\draw [<-] (.85,.5) -- (1.5,.5) ; }
		\end{tikzpicture} $};	
	\node  at (2.6,.6) {
	     $\begin{tikzpicture}[scale=.3,baseline=.5cm]
		\blue{\draw [->]  (.85,.5) -- (.1,.1) ; 
		\draw [->] (.85,.5) --  (.15,.9); }
		\red{\draw [<-] (.85,.5) -- (1.5,.5) ; }
		\end{tikzpicture} $};	
	\node at (1.7,.6) {
	     $\begin{tikzpicture}[scale=.3,baseline=.5cm]
		\blue{\draw [<-] (.5,.9) -- (.85,.5) ; }
		\red{\draw [<-] (.85,.5) -- (1.5,.5) ; }
		\end{tikzpicture} $};	
	\node at (2.2,1.2) {
	     $\begin{tikzpicture}[scale=.3,baseline=.5cm]
		\blue{\draw [<-] (.5,.1) -- (.85,.5) ; }
		\red{\draw [<-] (.85,.5) -- (1.5,.5) ; }
		\end{tikzpicture} $};	
\end{tikzpicture} \quad\\
\textcolor{green!60!black}{-1}\cdot \Big[
\begin{tikzpicture}[scale=.5,baseline=.2cm]
	\node (1) at (0,.1) {$\scriptstyle 1$};
	\node (2) at (2.5,.5) {$\scriptstyle 4$};
	\node (3) at (0,1) {$\scriptstyle 2$};
	\node (4) at (1,.5) {$\scriptstyle 3$};
	\draw [fill=blue!40] (.1,.1) .. controls (.25,.5) .. (.15,.9) .. controls (.4,.5) .. (.85,.5) .. controls (.5,.4) .. (.1,.1) ; 
	\draw [thick,color=red!80] (4).. controls (1.8,.6) ..(2);
\end{tikzpicture}  \Big]
&\textcolor{green!60!black}{+\quad 0}\cdot \Big[
 \begin{tikzpicture}[scale=.5,baseline=.2cm]
	\node (1) at (0,.1) {$\scriptstyle 1$};
	\node (2) at (2.5,.5) {$\scriptstyle 4$};
	\node (3) at (0,1) {$\scriptstyle 2$};
	\node (4) at (1,.5) {$\scriptstyle 3$};
	\draw [thick,color=red!80] (4).. controls (1.8,.6) ..(2);
\end{tikzpicture}  \Big]
 & \textcolor{green!60!black}{+\quad 2} \cdot \Big[
 \begin{tikzpicture}[scale=.5,baseline=.2cm]
	\node (1) at (0,.1) {$\scriptstyle 1$};
	\node (2) at (2.5,.5) {$\scriptstyle 4$};
	\node (3) at (0,1) {$\scriptstyle 2$};
	\node (4) at (1,.5) {$\scriptstyle 3$};
	\draw [fill=blue!40] (.1,.1) .. controls (.25,.5) .. (.15,.9) .. controls (.4,.5) .. (.85,.5) .. controls (.5,.4) .. (.1,.1) ; 
\end{tikzpicture}  \Big]
 &  \textcolor{green!60!black}{+\quad 0} \cdot \Big[ \begin{tikzpicture}[scale=.5,baseline=.2cm]
	\node (1) at (0,.1) {$\scriptstyle 1$};
	\node (2) at (2.5,.5) {$\scriptstyle 4$};
	\node (3) at (0,1) {$\scriptstyle 2$};
	\node (4) at (1,.5) {$\scriptstyle 3$};
\end{tikzpicture} \Big]
   \\
\end{array}
$$
\end{example}

One nice application of Corollary~\ref{cor:geom_antipode} is to continue~\cite[Example 4.5]{BB16}. Let us recall the definitions we need.

\begin{definition}\label{def:forest}
Given a hypergraph $G$, we say that $a_0{\buildrel U_1\over \longrightarrow}a_1{\buildrel U_2\over \longrightarrow}\cdots
{\buildrel U_\ell\over \longrightarrow}a_\ell$ is a \emph{path} of $G$ if $a_{i-1}\ne a_i$ and $\{a_{i-1},a_{i}\}\subset U_i\in G$ for each $1\le i\le \ell$.
We say that a path is \emph{proper} if  all the hyperedges $U_i$ are distinct.
A proper cycle in $G$ is a proper path such that $a_0=a_\ell$.
A hypergraph is a \emph{hyperforest} if it does not contain proper cycles. 
\end{definition}

We remark that if $G$ is a hyperforest, then the flats of $G$ precisely all possible subsetes of hyperedges $F\subseteq G$. 
%since any two edges intersect in at most ONE vertex, and there is no cycle,
% when you contract an edge, you do not create any further contraction. so any subset is a flat (the flat it spans does not contain any new edges).
The hyperedges $\{U_1, U_2, \dots, U_m\}$ of any hyperforest $G$ can be ordered so that 
\[|(U_1 \cup U_2 \cup \cdots \cup U_i) \cap U_{i+1}| \leq 1\]
for each $i$~\cite[Lemma 7]{Taylor}.
In this case, we obtain
  $$P_G=\prod_{U\in G}{\bf \Delta}_U.$$
In fact since the acyclic orientations of $G/F$ correspond to the boundary of $\prod_{U\in G/F} {\bf \Delta}_U$, we get the following proposition.

\begin{proposition}\cite[Prop 4.6]{BB16} Let $G$ be a hyperforest, $F$ a flat of $G$ and $k=|G/F|$. Also let $\ell$ be the number of connected components of $G/F$. Then
   $$a(G/F) = \begin{cases} 
      (-1)^\ell (-2)^k& \text{if   $\forall U\in G/F$ we have $|U|$ is even,}
     \\
     0& \text{otherwise.}
   \end{cases}
     $$
\end{proposition}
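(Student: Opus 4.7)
My plan is to combine the geometric interpretation of Corollary~\ref{cor:geom_antipode} with the product structure $P_H \cong \prod_{U \in H} {\bf \Delta}_U$ available when $H$ is a hyperforest, and then invoke the multiplicativity of the Euler characteristic.

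First I would observe that any contraction $G/F$ of a hyperforest is again a hyperforest, so that $H := G/F$ satisfies the hypotheses of the proposition with the same $k$ and $\ell$. By the remark following Corollary~\ref{cor:geom_antipode}, $a(G/F)$ equals the coefficient of the empty flat in $S(H)$, and Corollary~\ref{cor:geom_antipode} itself gives
$$(-1)^n a(H) \;=\; \chi\!\Big(\textstyle\bigcup_{\mathcal{O}\in\mathfrak{O}(H)} f_\mathcal{O}\Big),$$
where $n = |V(H)|$ and $f_\mathcal{O}$ is the face of $P_H$ associated to $\mathcal{O}$ by Theorem~\ref{THM:HypegraphicPolytope}.

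Next, using Taylor's ordering I would establish the preparatory lemma that (a) $P_H$ is combinatorially the product $\prod_{U\in H} {\bf \Delta}_U$, and (b) every orientation of a hyperforest is automatically acyclic --- built up in Taylor's order, each new hyperedge is glued along at most one vertex and so cannot close a directed cycle in $H/\mathcal{O}$. Under the product identification, a face of $P_H$ lies in the empty-flat stratum precisely when it has the form $\prod_U {\bf \Delta}_{K_U}$ with $K_U \subsetneq U$ nonempty for every $U$, and the union of all such faces is exactly $\prod_{U\in H}\partial {\bf \Delta}_U$. Since $\partial {\bf \Delta}_U$ is a $(|U|-2)$-sphere,
$$(-1)^n a(H) \;=\; \prod_{U\in H} \chi(\partial {\bf \Delta}_U) \;=\; \prod_{U\in H}\big(1+(-1)^{|U|}\big),$$
which vanishes as soon as some $|U|$ is odd and equals $2^k$ when every $|U|$ is even.

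In the nonvanishing case, Taylor's lemma also yields $n = \sum_{U\in H}(|U|-1) + \ell$; since each $|U|$ is even this forces $n \equiv k + \ell \pmod{2}$, hence
$$a(H) \;=\; (-1)^n 2^k \;=\; (-1)^{k+\ell} 2^k \;=\; (-1)^\ell(-2)^k,$$
as claimed. The main technical obstacle is the structural lemma of the previous paragraph --- checking that $P_H$ really factors as a product of simplices and that every orientation of a hyperforest is acyclic; once these are in hand, the factorization of $\chi$ and the parity bookkeeping are entirely routine.
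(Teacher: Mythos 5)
Your proposal is correct and follows essentially the same route the paper indicates for this proposition: the product decomposition $P_{G/F}\cong\prod_{U\in G/F}{\bf\Delta}_U$ available for hyperforests via Taylor's ordering, combined with the Euler-characteristic interpretation of Corollary~\ref{cor:geom_antipode}. Your write-up is in fact slightly more careful than the paper's one-line justification, since you correctly identify the union of faces indexed by acyclic orientations of $G/F$ as $\prod_{U}\partial{\bf\Delta}_U$ (all $K_U\subsetneq U$) rather than the topological boundary $\partial\bigl(\prod_U{\bf\Delta}_U\bigr)$, and you make explicit the parity identity $n=\sum_{U}(|U|-1)+\ell$ needed to convert $(-1)^n2^k$ into $(-1)^\ell(-2)^k$.
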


%%%%%%%%%%%%%%%%%%%%%%%%%%%
%%%%%%%%%%%%%%%%%%%%%%%%%%%
%%%%%%%%%%%%%%%%%%%%%%%%%%%
\section{Simple Polytopes}\label{S:polytope}

In this section we will consider certain families of polytopes: nestohedra, generalized Pitman-Stanley polytopes, and hyper-permutahedra. We will
use the correspondence between acyclic orientations and faces of hypergraphic polytopes from Theorem~\ref{THM:HypegraphicPolytope} to show that these polytopes are simple. Although some of these results are known our context provides a new perspective to study them.
In particular, we demonstrate that one is able to obtain information about Minkowski sums of simplices by only considering orientations of the underlying hypergraph.

Recall that for a hypergraph $G$ the set of its acyclic orientations is denoted $\mathfrak{O}(G)$. For each $k\in\{0,1,\dots,n-1\}$ define the set
\[\mathfrak{O}_k(G) := \{ \O \in \mathfrak{O}(G) : |V(G / \O)| = |V(G)| - k\}.\]
Observe that if $G$ has $n$ vertices then
\[\mathfrak{O}(G) = \bigsqcup_{k=0}^{n-1} \mathfrak{O}_k(G).\]
The \emph{1-skeleton} of a polytope $P$ is the graph consisting of the 0-dimensional and 1-dimensional faces of $P$. We denote the 1-skeleton of $P$ by $P^{(1)}$.
If $P$ is a $d$-dimensional polytope, then $P$ is called \emph{simple} if and only if $P^{(1)}$ is a $d$-regular graph. That is, if and only if every vertex of $P^{(1)}$ is incident to exactly $d$ edges.
By Theorem~\ref{THM:HypegraphicPolytope} the vertex set of $P_G^{(1)}$ is in 1-1 correspondence with $\mathfrak{O}_0(G)$ and the edge set is in 1-1 correspondence with
\[\mathfrak{O}_1(G) \sqcup \bigsqcup_{\substack{e \in G \\ |e| = 2}} \mathfrak{O}_0(G/e).\]

We now discuss the relationship between our results and work of Postnikov, Reiner, and Williams on generalized permutahedron.
The normal fan of any generalized permutahedron is known to be refined by the \emph{braid arrangement fan}~\cite[Proposition 3.2]{PRW}.
In the language of Postnikov-Reiner-Williams each cone in the normal fan of a generalized permutahedron is encoded by a preposet (i.e. a reflexive and transitive binary relation) while the normal fan is encoded by a \emph{complete fan of preposets}~\cite[Section 3]{PRW}.
In this context, our Theorem~\ref{THM:HypegraphicPolytope} says that when a generalized permutahedron is a hypergraphic polytope, the complete fan of preposets encoding its normal fan can be understood in terms of acyclic orientations.
Postnikov-Reiner-Williams~\cite[Corollary 3.6]{PRW} determine which complete fans of preposets correspond to complete fans of simplicial cones (and hence to simple polytopes).
They observe that cones of codimension 1 contained in a given cone of a normal fan are in bijection with the covering relations of the preposet corresponding to the cone~\cite[Proposition 3.5]{PRW}.
We state an equivalent result, translated to our language, for hypergraphic polytopes.

If $D$ is a directed acyclic graph we can think of it as a poset on its vertices and covering relations given by its edges.
We will denote the Hasse diagram of this poset $\Hasse(D)$ (i.e. the transitive reduction of $D$).
Let $G$ be a hypergraph, $F \in Flats(G)$, and $\O \in \mathfrak{O}(G/F)$.
We will identify the faces of $P_G$ and acyclic orientations via Theorem~\ref{THM:HypegraphicPolytope}.
The faces of $P_G$ containing $\O$ as a face of codimension 1 are then in bijection with edges of $\Hasse ((G/F)/\O)$.
Furthermore, if we contract a given edge $e$ of $\Hasse((G/F)/\O)$ there is a (necessarily unique) pair $(F', \O')$ such that $F' \in Flats(G)$, $\O'$ is an acyclic orientation of $G/F'$, and $(G/F')/\O'$ is equal to $(G/F)/\O$ contracted by $e$.
The pair $(F', \O')$ can be obtained using Lemma~\ref{lem:Omega}.
We record this result now as a lemma for later use.

\begin{lemma}
For $F \in Flats(G)$ and $\O \in \mathfrak{O}(G/F)$, the faces of $P_G$ containing the face indexed by $\O$ as a face of codimension 1 are in bijection with edges of $\Hasse((G/F)/\O)$, and each orientation $\O' \in \mathcal{AO}$ corresponding to such a face can be obtained by contracting $(G/F)/\O$ by an edge of $\Hasse((G/F)/\O)$.
\label{lem:simple}
\end{lemma}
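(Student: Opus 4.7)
The plan is to work dually in the normal fan $\mathcal{N}(P_G)$ and exploit Theorem~\ref{THM:HypegraphicPolytope}. Under the correspondence of that theorem, the face of $P_G$ indexed by $\O$ corresponds to the cone $C_\O$, and a face $f'$ contains that face as a codimension-1 face if and only if its cone $C_{f'}$ is a facet of $C_\O$ in $\mathcal{N}(P_G)$. So the task reduces to enumerating the facets of $C_\O$ and identifying each one as $C_{\O'}$ for some $\O' \in \mathcal{AO}$.

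First I would describe $C_\O$ by its irredundant defining halfspaces. The cone $C_\O$ lies in the linear subspace $L_\O$ cut out by the equalities $x_a = x_b$ for $a,b$ in a common block of $[n]/\O$; inside $L_\O$, it is the intersection of the halfspaces $x_{[a]} \ge x_{[b]}$ indexed by the arrows $([a],[b])$ of the DAG $(G/F)/\O$. A standard observation about finite posets is that the non-redundant inequalities among these are exactly those indexed by covering relations, i.e.\ by the edges of $\Hasse((G/F)/\O)$. This immediately gives a bijection between the facets of $C_\O$ and the edges of the Hasse diagram, which is the counting part of the lemma.

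Next, I would identify the facet corresponding to a Hasse edge $e = ([a],[b])$ as $C_{\O'}$. Turning $x_{[a]} \ge x_{[b]}$ into $x_{[a]} = x_{[b]}$ yields a cone $C' \subseteq C_\O$ of codimension one. Since $\mathcal{N}(P_G)$ is a fan and $C' \subseteq C_\O$ is a face of $C_\O$, $C'$ must itself be a cone of $\mathcal{N}(P_G)$. By Theorem~\ref{THM:HypegraphicPolytope} there is a unique pair $(F',\O') \in \bigcup_{F'} \mathfrak{O}(G/F')$ with $C' = C_{\O'}$, and the algorithm used in the proof of that theorem constructs $(F',\O')$ explicitly: merging the blocks $[a]$ and $[b]$ of $[n]/\O$ forces the coarser partition $[n]/\O'$, and the algorithm resolves which hyperedges of $G$ fall into $F'$ and how the remaining ones are oriented. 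By construction $(G/F')/\O'$ is precisely $(G/F)/\O$ with the edge $e$ contracted, and since distinct Hasse edges contract to distinct DAGs the assignment $e \mapsto \O'$ is injective, completing the bijection.

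The main obstacle is verifying that contracting the Hasse edge $e$ genuinely lands us in $\mathcal{AO}$, i.e.\ that the contracted DAG is acyclic and compatible with some $(F',\O')$. Acyclicity is essentially free: a new directed cycle through the merged vertex would require a directed path from $[a]$ to $[b]$ in $(G/F)/\O$ avoiding $e$, but such a path would prevent $e$ from being a covering relation, a contradiction. The remaining compatibility — that the merged partition really comes from a flat $F'$ of $G$ together with an acyclic orientation of $G/F'$ — is precisely what the algorithm in the proof of Theorem~\ref{THM:HypegraphicPolytope}, applied to the description $C' = C_\O \cap \{x_{[a]} = x_{[b]}\}$, is designed to output; invoking it is the cleanest way to close the argument without re-deriving the algorithm.
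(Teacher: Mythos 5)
Your argument is correct and follows essentially the same route as the paper: the paper records this lemma as the translation of the cone--preposet dictionary of Postnikov--Reiner--Williams (facets of the normal cone $C_{\O}$ correspond to covering relations of the associated preposet), with the pair $(F',\O')$ recovered via Lemma~\ref{lem:Omega}, which is exactly the dual-fan argument you spell out. The only difference is that you supply directly the irredundancy and acyclicity checks that the paper delegates to the citation, so nothing further is needed.
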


\begin{theorem}
Let $G$ be a hypergraph.
The polytope $P_G$ is a simple polytope if and only if for every $\O \in \mathfrak{O}_0(G)$ the Hasse diagram $\Hasse(G / \O)$ is a forest.
\label{thm:simple}
\end{theorem}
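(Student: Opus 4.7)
The plan is to reduce the problem to a vertex-by-vertex degree count and then invoke two standard facts: that a $d$-polytope is simple iff every vertex has degree exactly $d$ in its 1-skeleton (with degrees always at least $d$), and that a simple undirected graph on $n$ vertices with $k$ connected components has at least $n-k$ edges, with equality iff it is a forest. First I would identify the vertex set of $P_G$ with $\mathfrak{O}_0(G)$ using Theorem~\ref{THM:HypegraphicPolytope} together with the remark that $\dim C_{\mathcal O} = |[n]/\mathcal O|$: maximal cones of $\mathcal N(P_G)$ correspond to orientations inducing the discrete partition on $[n]$, i.e.\ those in which every head is a singleton. Then by Lemma~\ref{lem:simple} applied with $F=\emptyset$, the edges of $P_G$ incident to the vertex indexed by $\mathcal O$ are in bijection with the edges of $\Hasse(G/\mathcal O)$, so the degree of $\mathcal O$ in $P_G^{(1)}$ equals $|E(\Hasse(G/\mathcal O))|$.

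The core combinatorial step is a dimension count. Let $V^* = \bigcup_{U\in G} U$ and let $c$ denote the number of connected components of the hypergraph $G$ on $V^*$, where two vertices lie in the same component iff they are joined by a sequence of hyperedges sharing vertices. I would verify via the Minkowski decomposition that $\dim P_G = |V^*| - c$: each connected component contributes a full-dimensional summand inside a hyperplane of its own coordinate subspace, and vertices outside $V^*$ contribute nothing. For $\mathcal O \in \mathfrak{O}_0(G)$ every head is a singleton, so the underlying undirected graph of $G/\mathcal O$ has vertex set $[n]$ with an edge joining the singleton head of each hyperedge to each of its other vertices; hence its undirected connected components are precisely the hyperedge-components of $G$, together with $n - |V^*|$ isolated vertices from $[n]\setminus V^*$. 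Since the transitive reduction preserves connectivity, $\Hasse(G/\mathcal O)$ has the same number
\[k \;=\; c + (n - |V^*|) \;=\; n - \dim P_G\]
of connected components.

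Combining these pieces, the graph-theoretic inequality yields
\[|E(\Hasse(G/\mathcal O))| \;\geq\; n - k \;=\; \dim P_G,\]
with equality iff $\Hasse(G/\mathcal O)$ is a forest. Since simplicity of $P_G$ is equivalent to equality holding at every vertex, this shows that $P_G$ is simple iff $\Hasse(G/\mathcal O)$ is a forest for every $\mathcal O \in \mathfrak{O}_0(G)$, as required.

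The main obstacle I expect is pinning down the dimension formula $\dim P_G = |V^*| - c$ cleanly; although intuitive, a careful argument uses the standard fact that $P_{G_1 \sqcup G_2} = P_{G_1} \times P_{G_2}$ for hypergraphs with disjoint vertex supports, together with a check that each connected summand is full-dimensional in the corresponding simplex hyperplane (spanned by the vectors $e_i - e_j$ for $i,j$ in a common hyperedge, then closed under the connectivity relation). A minor subtlety is that $G/\mathcal O$ may carry multi-edges when two hyperedges share the same head--vertex pair, but these are merged in the transitive reduction and so do not affect $|E(\Hasse(G/\mathcal O))|$.
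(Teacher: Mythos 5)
Your proposal is correct and follows essentially the same route as the paper: identify vertices of $P_G$ with $\mathfrak{O}_0(G)$, use Lemma~\ref{lem:simple} to equate the degree of the vertex indexed by $\O$ with $|E(\Hasse(G/\O))|$, compute $\dim P_G$ as the number of vertices minus the number of connected components, and invoke the fact that a graph with that many components has at least that many edges with equality iff it is a forest. The only difference is that you track isolated vertices and possible multi-edges explicitly, details the paper's proof leaves implicit.
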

\begin{proof}
If $G$ has $n$ vertices, then the dimension of $P_G$ is $n - c$ where $c$ is the number of connected components of $G$.
Observe that $G / \O$ will have $c$ connected components for any acyclic orientation $\O$, and hence $\Hasse( G / \O)$ will also have $c$ connected components.
Now $P_G$ is a simple polytope if and only if each vertex of $P_G$ is incident to exactly $n-c$ edges of $P_G$.
By Lemma~\ref{lem:simple} we know that the edges of the polytope $P_G$ incident to the vertex corresponding to $\O \in \mathfrak{O}_0(G)$ are in bijective correspondence to the edges of $\Hasse(G / \O)$.
The theorem follows since $\Hasse(G / \O)$ has $n - c$ edges if and only if it is a forest.
\end{proof}

When $G$ is a simple graph the graphic zonotope $P_G$ is simple if and only if the \emph{biconnected components} of $G$ are cliques~\cite[Proposition 5.2]{PRW}.
This is equivalent to $G$ being the \emph{line graph} of a forest~\cite[Remark 5.3]{PRW}. Theorem~\ref{thm:simple} gives a characterization of when a hypergraphic polytope is simple, but it is not always easy to verify the conditions of the theorem.
Nonetheless we now illustrate Theorem~\ref{thm:simple} with the forthcoming examples.

We now define building sets and nestohedra following~\cite{Pos09}.
A \emph{building set} $\mathcal{B}$ on $[n]$ is a collection of nonempty subsets of $[n]$ satsifying the following two conditions
\begin{enumerate}
\item[(i)] if $I,J \in \mathcal{B}$ and $I \cap J \neq \emptyset$, then $I \cup J \in \mathcal{B}$,
\item[(ii)] and $\{i\} \in \mathcal{B}$ for all $i \in [n]$.
\end{enumerate}
Given a building set $\mathcal B$ define the \emph{nestohedron} $P_{\mathcal{B}}$ as the Minkowski sum
\[ P_{\mathcal{B}} = \sum_{I \in \mathcal{B}} {\bf \Delta}_I.\]
For such $\mathcal{B}$ we will consider the hypergraph $G_{\mathcal{B}}$ with vertex set $[n]$ and hyperedge set consisting of $I \in \mathcal{B}$ such that $|I| \geq 2$.
The hypergraphic polytope $P_{G_{\mathcal{B}}}$ and the nestohedron $P_{\mathcal{B}}$ only differ by translation.

\begin{proposition}
Any nestohedron is a simple polytope.
\label{prop:nest}
\end{proposition}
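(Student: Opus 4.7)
The plan is to invoke Theorem~\ref{thm:simple} with $G := G_{\mathcal{B}}$, so that it suffices to show that for every $\O \in \mathfrak{O}_0(G)$ the Hasse diagram $\Hasse(G/\O)$ is a forest. Such an $\O$ assigns a single head $a_I \in I$ to every $I \in \mathcal{B}$ with $|I| \geq 2$, and $G/\O$ is the directed graph on $[n]$ with edges $a_I \to b$ for all such $I$ and all $b \in I \setminus \{a_I\}$. I would show that every vertex of $\Hasse(G/\O)$ has in-degree at most one; since $G/\O$ is acyclic, a standard argument then yields that the underlying undirected graph is a forest (in any acyclic orientation of an undirected cycle the in-degrees sum to the number of edges, so unless all in-degrees equal one, some vertex has in-degree at least two, and if they all equal one the cycle is directed).

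The key lemma would be: if $I, J \in \mathcal{B}$ with $I \cap J \neq \emptyset$ then $a_{I \cup J} \in \{a_I, a_J\}$. Building-set axiom (i) places $I \cup J$ in $\mathcal{B}$; if its head lay in $I$ but differed from $a_I$, the edges $a_I \to a_{I \cup J}$ (from orienting $I$) and $a_{I \cup J} \to a_I$ (from orienting $I \cup J$) would form a 2-cycle, contradicting acyclicity. The symmetric case handles $J$. A consequence is that whenever $a_I \neq a_J$ (with, say, $a_{I \cup J} = a_I$), the edge $a_I \to a_J$ is present in $G/\O$.

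Next, fix $b \in [n]$ and let $S_b := \{a_I : I \in \mathcal{B},\ b \in I,\ a_I \neq b\}$ denote the set of direct predecessors of $b$ in $G/\O$. Applying the key lemma to any pair $I, J$ of hyperedges containing $b$ produces a direct edge between $a_I$ and $a_J$, so $S_b$ is a tournament inside the DAG $G/\O$, which forces it to be a chain. Let $a^{\ast}$ be its maximum element. For any other $a \in S_b$ the factorization $a \to a^{\ast} \to b$ shows that $a$ is not a cover of $b$; conversely, if some $c$ satisfied $a^{\ast} < c < b$ in the reachability poset, then the last arrow of a directed path from $c$ to $b$ would land at an element of $S_b$ strictly above $a^{\ast}$, which is impossible. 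Hence $a^{\ast}$ is the unique cover of $b$ in $\Hasse(G/\O)$, giving the desired in-degree bound.

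The main obstacle is the key lemma, which is the only place the closure-under-union axiom (i) of a building set is genuinely used: without it $S_b$ need not be totally ordered and the conclusion can fail. Once this lemma is in place, the combinatorics of DAGs with in-degree at most one delivers the result routinely.
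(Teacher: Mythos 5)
Your proof is correct and follows essentially the same route as the paper: both arguments reduce to Theorem~\ref{thm:simple} and show that every vertex of $\Hasse(G/\O)$ has in-degree at most one, using closure of the building set under unions together with acyclicity to force the head of $I\cup J$ to lie in $\{a_I,a_J\}$ and hence to produce an edge between any two in-neighbours of a given vertex. Your execution is slightly more local --- you compare direct predecessors of a fixed vertex $b$ rather than two directed paths converging at a common endpoint --- which cleanly isolates the key lemma and sidesteps the intermediate claim, left implicit in the paper, that a directed path from $b$ to $d$ yields a single hyperedge of $\mathcal{B}$ containing both with $b$ as its source.
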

\begin{proof}
Let $\mathcal{B}$ be any building set and let $G = G_{\mathcal{B}}$.
We will show for any $\O \in \mathfrak{O}_0(G)$ that $\Hasse(G / \O)$ is a forest.
The corollary will then follow from Theorem~\ref{thm:simple}.
In order for $\Hasse(G / \O)$ to be a forest, we must not be able to find a cycle in the underlying undirected graph.
In fact, we will show if we have a directed path from $b$ to $d$ and a directed path from $c$ to $d$ in $G / \O$, then we must also have a directed path from $b$ to $c$ or from $c$ to $b$ in $G / \O$.
This shows that in $\Hasse(G / \O)$ any vertex has in-degree at most $1$.
It follows that the underlying undirected graph of $\Hasse(G / \O)$ cannot contain a cycle since any acyclic orientation of a cycle graph must contain at least one vertex of in-degree $2$.

Assume that the sequences $b=u_0, u_1, u_2 \dots, u_p = d$ and $c=v_0, v_1, v_2, \dots, v_q = d$ give directed paths in $\Hasse(G / \O)$.
This means we have sequences of hyperedges $I_1, I_2, \dots, I_p$ and $J_1, J_2, \dots, J_q$ such that 
\begin{itemize}
\item $u_{i-1}, u_i \in I_i$ where $u_{i-1}$ is the source in $\O$
\item $v_{i-1}, v_i \in J_i$ where $v_{i-1}$ is the source in $\O$.
\end{itemize}
Since $\mathcal{B}$ was a building set and $\O$ is an acyclic orientation it follows that there exists a hyperedge $I$ containing $b$ and $d$ where $b$ is the source in $\O$, and there also exists a hyperedge $J$ containing $c$ and $d$ where $c$ is the source $\O$.
However, again using the fact that $\B$ is a building set we must have the hyperedge $I \cup J$ containing $b, c$, and $d$.
For the orientation $\O$ to be acyclic it follows either $b$ or $c$ must be the source of $I \cup J$.
Thus we must have either $(b,c)$ or $(c,b)$ in $G / \O$ and the proof is complete.
\end{proof}

\begin{remark}
Nestohedra are known to be simple from work of Postnikov~\cite[Theorem 7.4]{Pos09} and Feichtner-Sturmfels~\cite[Theorem 3.14]{FS05}.
\end{remark}

\subsection{Pitman-Stanley Polytopes}\label{sec:PS}

For any $n$ and $A \subseteq [n]$ with $n \in A$ we define the $(n-1)$-dimensional \emph{generalized Pitman-Stanley polytope} as the Minkowski sum
\[PS_{n,A} = \sum_{a \in A} {\bf\Delta}_{\{1,2,\ldots,a\}}.\]
Notice that $PS_{n,[n]}$ coincides with the  \emph{Pitman-Stanley polytope} from~\cite{Pitman-Stanley}.
We also observe that $PS_{n,A}$ is a translate of a nestohedron.

The Pitman-Stanley polytope $PS_{n,[n]}$ is closely related with parking functions. A \emph{parking function} of length $n$ is a sequence of nonnegative integers $\mathbf{a} = (a_1, a_2, \dots, a_n)$ such that $b_i \leq i-1$ where $b_1 \leq b_2 \leq \cdots \leq b_n$ is the increasing rearrangement of $\mathbf{a}$.
For any set of nonnegative integers $B$ we define $\Park_{n, B}$ to be the collection of parking functions of length $n$ which are sequences of elements taken from $B$.
Given a finite set of positive integers $A$ with $n = \max A$, define $\bar{A} := \{n - a : a \in A\}$.

\begin{proposition}
Consider $A = \{a_1 < a_2 < \cdots < a_k\}$ with $1 \not\in A$ and $n = \max A$.
The polytope $PS_{n,A}$ is a simple polytope with $f$-vector entries
\[f_j = \sum_{\substack{(\alpha_1, \alpha_2, \dots, \alpha_k) \\ 0 \leq \alpha_i \leq a_i - a_{i-1} \\ \alpha_1 + \alpha_2 + \cdots + \alpha_k = j}}\prod_{i=1}^k \binom{a_i - a_{i-1} + 1}{\alpha_i + 1}\]
where $a_0 = 1$.
Moreover, the normalized volume of $PS_{n,A}$ is given by
\[ \Vol(PS_{n,A}) = |\Park_{n-1,\bar{A}}|.\]
\label{prop:PS}
\end{proposition}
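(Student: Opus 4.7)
The plan is to establish the three conclusions (simplicity, $f$-vector, and volume) in turn. For simplicity, I would observe that $PS_{n,A}$ differs by a translation from the nestohedron $P_{\mathcal B}$ built from the building set $\mathcal B = \{\{1,2,\ldots,a\} : a \in A\} \cup \{\{i\} : i \in [n]\}$. Checking that $\mathcal B$ is indeed a building set is routine: two intersecting members of $\mathcal B$ are either two initial segments (automatically comparable, so closed under union) or a singleton contained in an initial segment (likewise closed), and there is nothing to check when they are disjoint. Since the simplices ${\bf \Delta}_{\{i\}}$ are single points, $PS_{n,A}$ is a translate of $P_{\mathcal B}$ and hence simple by Proposition~\ref{prop:nest}.

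For the $f$-vector, the strategy is to apply Theorem~\ref{THM:HypegraphicPolytope} to the hypergraph $G$ whose hyperedges are $U_i = \{1,\ldots,a_i\}$ for $i=1,\ldots,k$. A generic functional $w \in (\mathbb R^n)^*$ induces a weak order on $[n]$, and the face of $PS_{n,A}$ selected by $w$ is the Minkowski sum $\sum_i {\bf \Delta}_{S_i}$ where $S_i$ consists of the $w$-maximal elements of $\{1,\ldots,a_i\}$. By comparing the indices of the topmost non-empty blocks, one verifies that for each $i \ge 2$ either $S_{i-1} \subseteq S_i$ with $S_i \setminus S_{i-1} \subseteq \{a_{i-1}+1,\ldots,a_i\}$ (the ``keep'' case) or $S_{i-1} \cap S_i = \emptyset$ with $S_i \subseteq \{a_{i-1}+1,\ldots,a_i\}$ (the ``replace'' case). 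I would then encode the data as a tuple $(X_1,\ldots,X_k)$, where $X_1$ is any nonempty subset of $\{1,\ldots,a_1\}$ and, for $i \ge 2$, $X_i$ is any nonempty subset of $\{a_{i-1}+1,\ldots,a_i\} \sqcup \{\star\}$, the presence of $\star$ flagging the keep case. A direct dimension count---nested $S_i$ collapse to a single simplex while disjoint blocks contribute independently---gives that the corresponding face of $PS_{n,A}$ has dimension $\sum_i(|X_i|-1)$. Summing over tuples with $|X_i| = \alpha_i + 1$ yields the stated product formula; the $i=1$ term $\binom{a_1}{\alpha_1+1}$ matches $\binom{a_1-a_0+1}{\alpha_1+1}$ because of the convention $a_0 = 1$.

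For the volume, I would start from the inequality description
\[PS_{n,A} = \{ y \in \mathbb R^n_{\ge 0} : y_1 + y_2 + \cdots + y_n = |A| \text{ and } y_1 + \cdots + y_k \ge |A \cap [k]| \text{ for all } k\},\]
which is obtained by expanding a generic $y = \sum_{a \in A} x_a$ of the Minkowski sum with $x_a \in {\bf \Delta}_{\{1,\ldots,a\}}$ and checking that the partial-sum bounds are tight. Following Pitman and Stanley, the change of variable $z_k = y_1 + \cdots + y_k - |A \cap [k]|$ rewrites $\Vol(PS_{n,A})$ as an iterated integral over $(z_1,\ldots,z_{n-1})$ subject to $z_k \ge 0$ and $z_{k-1} \le z_k + \mathbf{1}_{k \in A}$. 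Expanding this integral inductively on $n$ produces a sum of monomials indexed by compatible sequences drawn from $\bar A$, and these compatible sequences can be identified with the parking functions counted by $\Park_{n-1,\bar A}$; the specialization $A = [n]$ recovers the classical identity $\Vol(PS_{n,[n]}) = n^{n-2}/(n-1)!$.

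The first two parts reduce rather mechanically to the combinatorial dictionary supplied by Theorem~\ref{THM:HypegraphicPolytope} and Proposition~\ref{prop:nest}. The main obstacle will be the volume calculation, where converting the iterated integral into the count $|\Park_{n-1,\bar A}|$ requires a careful bijection (or an inductive evaluation) whose bookkeeping depends delicately on the set $A$.
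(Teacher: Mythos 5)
Your treatment of simplicity and of the $f$-vector is essentially the paper's own argument. The simplicity claim is handled identically (the building set $\{[a]:a\in A\}\cup\{\{i\}:i\in[n]\}$ plus Proposition~\ref{prop:nest}), and your encoding of faces by tuples $(X_1,\dots,X_k)$ with a $\star$ flagging the ``keep'' case is exactly the paper's sequence $(S_1,\dots,S_k)$ with $S_i\subseteq([a_i]\setminus[a_{i-1}])\sqcup\{*\}$; your ``$w$-maximal elements of $[a_i]$'' are precisely the sources of the acyclic orientation under the dictionary of Theorem~\ref{THM:HypegraphicPolytope}, and the keep/replace dichotomy is the paper's observation that for nested hyperedges the sources are either nested or disjoint. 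Your dimension count $\sum_i(|X_i|-1)$ checks out.

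The volume computation is where you genuinely diverge, and it is also where your argument is not actually complete. Your inequality description of $PS_{n,A}$ is correct (it follows from the support function of a Minkowski sum of simplices), and the Pitman--Stanley iterated-integral method would in principle work; but the step you defer --- ``converting the iterated integral into the count $|\Park_{n-1,\bar{A}}|$ requires a careful bijection\dots'' --- is precisely the entire content of the claim $\Vol(PS_{n,A})=|\Park_{n-1,\bar{A}}|$, so as written nothing has been proved. The paper avoids the integral altogether: since $G=\{[a_1],\dots,[a_k]\}$ is connected on $n$ vertices, Postnikov's volume formula (\cite[Corollary 9.4]{Pos09}) gives the normalized volume as the number of sequences $(e_1,\dots,e_{n-1})$ of hyperedges with $|e_{i_1}\cup\cdots\cup e_{i_k}|\ge k+1$ for all distinct indices; because the hyperedges are nested, this union is just the largest $e_{i_j}$, and the map $(e_1,\dots,e_{n-1})\mapsto(n-|e_1|,\dots,n-|e_{n-1}|)$ is then an immediate bijection onto $\Park_{n-1,\bar{A}}$. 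I would recommend either adopting that route or actually carrying out the inductive evaluation of your integral; note also that your sanity check $\Vol(PS_{n,[n]})=n^{n-2}/(n-1)!$ is the Euclidean volume, whereas the statement concerns the \emph{normalized} volume $n^{n-2}$, so you will need to keep the normalization straight when you set up the integral.
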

\begin{proof}
Let $A = \{a_1 < a_2 < \cdots < a_k\}$ and $G = \{[a_1], [a_2], \cdots, [a_k]\}$. Notice that $PS_{n,A} =  P_G$ is a hypergraphic polytope.
The polytope $PS_{n,A}$ is simple by Proposition~\ref{prop:nest} since $G$ is the hypergraph of a building set.
The flats of $G$ are of the form $F_i = G\big|_{([a_i], \{a_{i}+1\}, \dots, \{n\})}$  for $0 \leq i \leq k$.
The $j$-faces of $PS_{n,A}$ correspond to acyclic orientations $\O$ of $G / F$ where $F$ is a flat and $G/F$ has $n - j$ vertices.

If $\O$ is an acyclic orientation of $G / F_{i^*}$ for some $0 \leq i^* \leq k$, then for $i > i^*$ we set $\overline{[a_i]} = \{a_{i^*}, a_{i^*+1}, \dots, a_i\}$ to represent the hyperedge $[a_i]$ after contraction.
We obtain a sequence of sets $(S_1, S_2, \dots, S_k)$ where $S_i \subseteq ([a_i] \setminus [a_{i-1}]) \sqcup \{*\}$ for each $1 \leq i \leq k$.
We start by letting $S_i = ([a_i] \setminus [a_{i-1}]) \cup \{*\}$ for $1 \leq i \leq i^*$.
For $i^* < i \leq k$, we get the set $S_i$ by the following rule:
\begin{itemize}
\item If the sources of $\overline{[a_i]}$ in $\O$ are disjoint from the sources of $\overline{[a_{i-1}]}$ in $\O$, then let $S_i$ be the sources of  $\overline{[a_i]}$ in $\O$.
\item Otherwise the sources of $\overline{[a_i]}$ in $\O$ are not disjoint from the sources $\overline{[a_{i-1}]}$ in $\O$, and in this case we let $S_i$ be $*$ along with the sources of $\overline{[a_i]}$ in $\O$ which are in $\overline{[a_i]} \setminus \overline{[a_{i-1}]}$.
\end{itemize}

Given a sequence of sets $(S_1, S_2, \dots, S_k)$ where $S_i \subseteq ([a_i] \setminus [a_{i-1}]) \cup \{*\}$, we construct an orientation $\O$ as follows.
We let $i^*$ be chosen so that $S_{i^*+1} \neq ([a_{i^*+1}] \setminus [a_{i^*}]) \cup \{*\}$ but $S_i = ([a_i] \setminus [a_{i-1}]) \cup \{*\}$ for all $i < i^*$.
In this case we construct an orientation of $G / F_{i^*}$.
If $S_i \subseteq [a_i] \setminus [a_{i-1}]$, then we let the sources of $[a_i]$ be the the elements of $S_i$.
Otherwise if $* \in S_i$, then we let the sources of $[a_i]$ be the sources of $[a_{i-1}]$ along with the elements of $S_i \setminus \{*\}$.

The two processes are inverse to each other.
We have used the fact that if $e \subset f$ are hyperedges, then in any acyclic orientation the sources of $f$ must either contain all sources of $e$ or must be disjoint.
It is clear that there are
\[\sum_{\substack{(\alpha_1, \alpha_2, \dots, \alpha_k) \\ 0 \leq \alpha_i \leq a_i - a_{i-1} \\ \alpha_1 + \alpha_2 + \cdots + \alpha_k = j}}\prod_{i=1}^k \binom{a_i - a_{i-1} + 1}{\alpha_i + 1}\]
such sequences of sets.
The result on the $f$-vector follows.

It remains to compute the volume of $P_G$.
Since $G$ is a connected hypergraph on $n$ vertices,
it follows from~\cite[Corollary 9.4]{Pos09} that the normalized volume of the hypergraphic polytope $P_G$ is equal to the number of sequences
$(e_1,e_2, \dots, e_{n-1})$ of hyperedges of $G$ such that $|e_{i_1} \cup e_{i_2} \cup \cdots \cup e_{i_k}| \geq k+1$ for any distinct $i_i, i_2, \cdots, i_k$.
We will exhibit a bijection between the set of such sequences and $\Park_{n-1,\bar{A}}$.
We claim that the map 
\[ (e_1,e_2, \dots, e_{n-1}) \mapsto (n-|e_1|,n-|e_2|, \dots, n-|e_{n-1}|)\]
gives this desired bijection between the sequences of hyperedges contributing to the volume of $P_G$ and $\Park_{n-1,\bar{A}}$.
The inverse map is
\[(a_1,a_2, \cdots, a_{n-1}) \mapsto (e_1, e_2, \dots,e_{n-1})\]
where $e_i$ is the unique hyperedge in $G$ with $|e_i| = n-a_i$.
For a sequence of hyperedges $(e_1,e_2, \dots, e_{n-1})$ it is clear each $n-|e_i| \in \bar{A}$ if each $e_i \in G$.
Let $f_1 \subseteq f_2 \subseteq \cdots \subseteq f_{n-1}$ be the increasing rearrangement of the sequence of hyperedges $(e_1, e_2, \dots, e_{n-1})$.
In order for this sequence to contribute to the volume we must have $|f_i| \geq i + 1$.
Since $|f_{n-i}| \geq n-i+1$ if and only if $n-|f_{n-i}| \leq i-1$ the result follows.
\end{proof}

Now let us apply Proposition~\ref{prop:PS} when $n = mk+1$ and $A = \{k+1,2k+1, \dots, n\}$.
In this case the $f$-vector entries of $PS_{n,A}$ are given by
\[
     f_j = \sum_{\substack{(\alpha_1, \alpha_2, \dots, \alpha_m) \\ 0 \leq \alpha_i \leq k \\ \alpha_1 + \alpha_2 + \cdots + \alpha_m = j}}\prod_{i=1}^m \binom{k+1}{\alpha_i + 1}.
\]
By letting $b_i = | \{ \ell : a_{\ell} = i\}|$ we obtain
\begin{equation}
     f_j = \sum_{\substack{b_0, b_1, \cdots, b_k \geq 0\\b_0 + b_1 + \cdots + b_k = m\\b_1 + 2b_2 + \cdots +kb_k = j}} \binom{m}{b_0, b_1, \dots, b_k} \prod_{i = 0}^k \binom{k+1}{i+1}^{b_i}.
     \label{eq:f}
\end{equation}
 From either of these expressions we can observe that the number of vertices of such a polytope is $f_0 = (k+1)^m$.
 Also, the number of facets of this $mk$-dimensional polytope is $f_{mk-1} = m(k+1)$. 
 
 \begin{example}[k=1] If $k = 1$ we have $A = \{2,3,\dots,n\}$ and thus the polytope $PS_{n,A}$ conincides with the Pitman-Stanley polytope. Proposition ~\ref{prop:PS} tells us that the $f$-vector entries of $PS_{n,A}$ are given by
 \[   f_j = \sum_{\substack{b_0, b_1 \geq 0\\b_0 + b_1 = n-1\\b_1=j}} \binom{n-1}{b_0,b_1} \binom{2}{1}^{b_0} \binom{2}{2}^{b_1} = 2^{n-1-j}\binom{n-1}{j}.\]
 which agrees with the $f$-vector of an $(n-1)$-dimensional hypercube.
 The Pitman-Stanley polytope $PS_{n,[n]}$ is known to be combinatorially equivalent to an $(n-1)$-dimensional hypercube~\cite[Theorem 19]{Pitman-Stanley}.
 \end{example}

 \begin{example}[k=2] The case when $k=2$ and $n = 2m + 1$ with $A = \{3, 5, \dots, n\}$ gives us that the $f$-vector entries are
 \begin{align*}
     f_j &= \sum_{\substack{b_0, b_1, b_2 \geq 0\\b_0 + b_1 + b_2 = m\\b_1 + 2b_2 = j}}\binom{m}{b_0,b_1,b_2} \binom{3}{1}^{b_0}\binom{3}{2}^{b_1}\binom{3}{3}^{b_2}
 \end{align*}
 When $j = 2j'$ is even we obtain
 \begin{align*}
     f_j &= \sum_{r=0}^{j'} \binom{m}{m-j'-r,2r,j'-r}3^{m-j'+r}\\
     &= 3^{m-j'}\sum_{r=0}^{j'} \binom{m}{m-j'-r,2r,j'-r}3^r.
 \end{align*}
If $j = 2j'+1$ is odd we obtain
 \begin{align*}
     f_j &= \sum_{r=0}^{j'} \binom{m}{m-j'-r-1,2r+1,j'-r}3^{m-j'+r}\\
     &= 3^{m-j'}\sum_{r=0}^{j'} \binom{m}{m-j'-r-1,2r+1,j'-r}3^r.
 \end{align*}
We see that $3$ divides $f_j$ for $0 \leq j \leq 2m$.
This generalizes to sets $A = \{(p-1) + 1, 2(p-1) + 1, \dots n\}$ where $n = m(p-1) + 1$ and $p$ is prime.
\end{example}

\begin{proposition}
Let $n = m(p-1) + 1$ for some prime $p$ and let 
\[A = \{(p-1) + 1, 2(p-1) + 1, \dots n\}.\]
Then the polytope $PS_{n,A}$ is $m(p-1)$-dimensional and its $f$-vector entries satisfy
\[f_j \equiv 0 \pmod{p} \]
for $0 \leq j < m(p-1)$.
\label{prop:modp}
\end{proposition}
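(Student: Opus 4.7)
The plan is to apply equation~\eqref{eq:f} directly with $k = p-1$, and exploit the divisibility of binomial coefficients $\binom{p}{r}$ by $p$ for $1 \le r \le p-1$. First, the dimensional statement is immediate from Proposition~\ref{prop:PS}: the polytope $PS_{n,A}$ has dimension $n-1 = m(p-1)$, since $A = \{(p-1)+1, 2(p-1)+1, \ldots, m(p-1)+1\}$ consists of elements of the form $i(p-1)+1$ for $i = 1,2,\ldots,m$ with $n = \max A = m(p-1)+1$.

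Next I would substitute $k = p-1$ into \eqref{eq:f} to obtain
\[
f_j = \sum_{\substack{b_0,b_1,\ldots,b_{p-1} \geq 0 \\ b_0 + b_1 + \cdots + b_{p-1} = m \\ b_1 + 2b_2 + \cdots + (p-1)b_{p-1} = j}} \binom{m}{b_0,b_1,\ldots,b_{p-1}} \prod_{i=0}^{p-1} \binom{p}{i+1}^{b_i}.
\]
The key observation is that for $0 \leq i \leq p-2$ we have $1 \leq i+1 \leq p-1$, and since $p$ is prime, $p \mid \binom{p}{i+1}$ for all such indices; only $\binom{p}{p} = 1$ escapes being divisible by $p$. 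Consequently, the $p$-adic valuation of $\prod_{i=0}^{p-1} \binom{p}{i+1}^{b_i}$ is at least $b_0 + b_1 + \cdots + b_{p-2} = m - b_{p-1}$.

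It follows that every term in the sum for $f_j$ is divisible by $p^{m - b_{p-1}}$, and in particular by $p$ whenever $b_{p-1} < m$. On the other hand, $b_{p-1} = m$ forces $b_0 = b_1 = \cdots = b_{p-2} = 0$, and then the second constraint reads $j = (p-1)m$. Thus the unique composition with $b_{p-1} = m$ contributes only to $f_{m(p-1)}$; for every $j$ with $0 \leq j < m(p-1)$, every summand satisfies $b_{p-1} < m$ and is therefore divisible by $p$, yielding $f_j \equiv 0 \pmod{p}$. No step here is a serious obstacle; the proof is essentially a bookkeeping argument on the $p$-adic valuations of the binomial coefficients appearing in~\eqref{eq:f}.
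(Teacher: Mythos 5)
Your proposal is correct and follows essentially the same route as the paper: substitute $k=p-1$ into Equation~(\ref{eq:f}) and note that any term with $j < m(p-1)$ must have some $b_i \neq 0$ with $i < p-1$, hence carries a factor $\binom{p}{i+1}$ divisible by $p$. Your additional bookkeeping on $p$-adic valuations is a mild refinement of the same observation, so there is nothing substantive to distinguish the two arguments.
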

\begin{proof}
From Equation~(\ref{eq:f}) we see that
\[ f_j = \sum_{\substack{b_0, b_1, \cdots, b_{p-1} \geq 0\\b_0 + b_1 + \cdots + b_{p-1} = m\\ b_1 + 2b_2 + \cdots +(p-1)b_{p-1} = j}} \binom{m}{b_0, b_1, \dots b_{p-1}} \prod_{i = 0}^{p-1} \binom{p}{i+1}^{b_i}.\]
Since $0 \leq j < m(p-1)$ each term in the sum must have $b_i \neq 0$ for some $0 \leq i < p-1$, and hence this term will be divisble by $p$ since it will have a factor of $\binom{p}{i+1}$ which is divisible by $p$.
\end{proof}

\subsection{Hyper-permutahedra}\label{ss:hyper-perm}
In~\cite{Ag16} Agnarsson studies a class of generalized permutahedra which are hypergraphic polytopes.
Polytopes in this class are called \emph{hyper-permutahedra} and defined by $\Pi_{n-1}(k-1) := P_G$ for $G = \binom{[n]}{k}$.
We always assume $k\ge 2$.
Hyper-permutahedra are known to be simple polytopes~\cite[Proposition 2.4]{Ag16}.
We now give another proof that $\Pi_{n-1}(k-1)$ is simple using acyclic orientations in hypergraphs.

\begin{proposition}
The hyper-permutahedron $\Pi_{n-1}(k-1)$ is a simple polytope.
\label{prop:hyperperm}
\end{proposition}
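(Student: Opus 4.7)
The plan is to invoke Theorem~\ref{thm:simple}, so it suffices to show that $\Hasse(G/\O)$ is a forest for every acyclic $\O \in \mathfrak{O}_0(G)$, where $G = \binom{[n]}{k}$. Since $\O \in \mathfrak{O}_0(G)$ identifies no vertices, it simply assigns a source $a_U \in U$ to each $k$-subset $U$, and $G/\O$ is the directed graph on $[n]$ whose edges are $(a_U, u)$ for $u \in U \setminus \{a_U\}$.

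The first step I would take is to observe that such an acyclic $\O$ has a very rigid form. Since $G/\O$ is a DAG it induces a partial order $\prec$ on $[n]$; pick any linear extension $\pi$. For each $U \in G$, the source $a_U$ satisfies $a_U \prec u$ for every $u \in U \setminus \{a_U\}$, and hence $a_U$ must be the $\pi$-minimum of $U$. Thus every $\O \in \mathfrak{O}_0(G)$ arises from the rule ``$a_U = \pi\text{-}\min(U)$'' for some linear order $\pi$ on $[n]$.

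Next, using the $S_n$-symmetry of $\binom{[n]}{k}$, I would relabel so that $\pi = 1 < 2 < \cdots < n$ and read off $G/\O$ directly: $(i,j)$ is an edge precisely when $i < j$ and $\{i,j\}$ can be completed to a $k$-subset with minimum $i$, i.e., iff $i \leq n-k+1$. The transitive reduction is then a quick calculation: an edge $(i,j)$ of $G/\O$ survives in $\Hasse(G/\O)$ iff either $j = i+1$ (no intermediate vertex) or $i = n-k+1$ (all strictly larger vertices have no outgoing edges, since they violate $i \leq n-k+1$). This leaves the ``broom''-shaped tree consisting of the path $1 \to 2 \to \cdots \to (n-k+2)$ together with the additional edges $(n-k+1, n-k+3), \ldots, (n-k+1, n)$, for a total of $(n-k+1) + (k-2) = n-1$ edges.

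The only nontrivial point is the rigidity statement in the second paragraph; once acyclic orientations in $\mathfrak{O}_0(G)$ are seen to come from linear extensions of $\prec$, the Hasse diagram computation is routine and produces the same broom-shaped tree for every $\O$ (up to relabeling of vertices). Combined with Theorem~\ref{thm:simple}, this shows $\Pi_{n-1}(k-1) = P_G$ is simple.
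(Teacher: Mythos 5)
Your proof is correct, and it reaches the same structural conclusion as the paper -- that $\Hasse(G/\O)$ is always the same ``broom''-shaped tree (a path $a_1\to\cdots\to a_{n-k+2}$ with $k-2$ extra leaves hanging off $a_{n-k+1}$) -- but by a genuinely different route. The paper argues by induction on $n-k$: it shows that $G/\O$ has a \emph{unique} source (two sources would both have to be the head of any hyperedge containing them), peels that source off, and recurses on $\binom{[n-1]}{k}$, simultaneously producing the set composition $A=(\{a_1\},\ldots,\{a_{n-k+1}\},B)$ with $\Omega(A)=\O$. Your argument instead establishes the rigidity of $\O\in\mathfrak{O}_0(G)$ in one stroke via a linear extension of the induced partial order (forcing $a_U=\pi\text{-}\min(U)$ for every $U$), then uses the $S_n$-symmetry of $\binom{[n]}{k}$ to reduce to the standard order and computes the transitive reduction explicitly; the edge criterion $i\le n-k+1$ and the survival condition ($j=i+1$ or $i=n-k+1$) are both right, and the count $(n-k+1)+(k-2)=n-1$ edges on $n$ vertices of a connected graph does give a tree. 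Your version is shorter and avoids induction; what the paper's version buys is the explicit bijection $\O\leftrightarrow(\{a_1\},\ldots,\{a_{n-k+1}\},B)$, which it reuses immediately afterwards to count the vertices ($|\mathfrak{O}_0(G)|=n!/(k-1)!$) and to prove the $f$-vector formula via ordered pseudo-partitions, so if you wanted to continue to those results you would need to extract the set composition from your linear order anyway (it is $(\{a_1\},\ldots,\{a_{n-k+1}\},B)$ with $a_i=\pi^{-1}(i)$ and $B$ the last $k-1$ elements, noting that distinct linear orders differing only on $B$ give the same $\O$).
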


\begin{proof}
Let $G = \binom{[n]}{k}$ and consider $\O \in \mathfrak{O}_0(G)$.
We claim that there is a unique set composition $A=(\{a_1\},\{a_2\},\ldots, \{a_{n-k+1}\}, B)\models[n]$
such that $\Omega(A)=\O$ and the $\Hasse(G / \O)$ is a tree with edges
\[\{(a_j,a_{j+1}) : 1 \leq j < n-k+1\} \cup \{(a_{n-k+1},b) : b\in B\}.\]
We proceed by induction on $n-k$. As a base case, first assume $n = k$.
In this situation $G = \{[n]\}$ is a single hyperedge and for any $\O \in \mathfrak{O}_0$ we have $\O= \big\{(\{a\}, [n] \setminus \{a\}) \big\}$ for a unique $a\in [n]$. The
 $\Hasse(G / \O)$ has edges
\[\big\{(a,b) : b\in [n] \setminus \{a\} \big\}.\]
Hence the set composition $A=(\{a\},[n] \setminus \{a\})$ determines $\Hasse(G / \O)$ and  $\Omega(A)=\O$.

Assume now that $n-k > 0$ and take $\O \in \mathfrak{O}_0(G)$. Recall that by definition $\O \in \mathfrak{O}_0(G)$ is such that $V(G/\O)=[n]$.
We have that $G/\O$ is an acyclic directed graph, and thus $G/ \O$ has a source $a\in[n]$. 
Suppose $a'\ne a$ is another source of $G/ \O$. Let $e\in \binom{[n]}{k}=G$ be such that $a,a'\in e$.
The orientation  of $e$ in $\O$ must be such that both $a$ and $a'$ are the head of $e$, a contradiction.
We must then have a unique source and we let $a_1=a$. Note that for any $b\in [n]\setminus\{a\}$, there is a hyperedge containing both $a$ and $b$. Thus 
$(a, b)$ is an edge in $G / \O$.
Next consider $G' = G \setminus \{a_1\}$ which is isomorphic to $\binom{[n-1]}{k}$.
The orientation $\O \in \mathfrak{O}_0(G)$ corresponds to an orientation $\O' \in \mathfrak{O}_0(G')$ by forgetting any hyperedges containing $a_1$.
By induction hypothesis $\Hasse(G' / \O')$ is obtained from $A'=(\{a_2\},\ldots,\{a_{n-k+1}\},B)$ such that $\Omega(A')=\O'$.
Since $a_1$ is smaller than all element of $\Hasse(G' / \O')$ with unique minimal element $a_2$, it follows that $\Hasse(G / \O)$ is obtained from $A=(\{a_1\},\{a_2\},\ldots, \{a_{n-k+1}\}, B)\models[n]$
as required and by construction $\Omega(A)=\O$.
We have $\Hasse(G / \O)$ is a tree and so $\Pi_{n-1}(k-1)$ is a simple polytope by Theorem~\ref{thm:simple}.
Since the sequence of sources $(a_1,a_2,\ldots,a_{n-k+1})$ are unique at each stage, we have that the set composition $A$ is unique.
\end{proof}

In the proof above, we saw that  $\O \in \mathfrak{O}_0(G)$ determines a unique set composition $A=(\{a_1\},\ldots, \{a_{n-k+1}\}, B)\models[n]$.
The converse is also true: given any $A=(\{a_1\},\ldots, \{a_{n-k+1}\}, B)\models[n]$, the orientation $\Omega(A)\in \mathfrak{O}_0(G)$.
It then follows that $|\mathfrak{O}_0(G)| = (n-k+1)!\binom{n}{k-1}=\frac{n!}{(k-1)!}$ and therefore $\Pi_{n-1}(k-1)$ has $\frac{n!}{(k-1)!}$ vertices.
We now illustrate this with an example.

\begin{example}
Let $G = \binom{[5]}{3}$.
We consider the orientation $\O \in \mathfrak{O}_0(G)$ consisting of:
\[\begin{array}{c}
(\{2\}, \{1,3\}) \quad (\{2\}, \{1,4\}) \quad (\{2\}, \{1,5\}) \quad (\{4\}, \{1,3\}) \quad (\{1\}, \{3,5\})\\
(\{4\}, \{1,5\}) \quad (\{2\}, \{3,4\}) \quad (\{2\}, \{3,5\}) \quad (\{2\}, \{4,5\}) \quad (\{4\}, \{3,5\})
\end{array}\]
This orientation corresponds to the set composition $A =( \{2\}, \{4\}, \{1\}, \{3,5\})$.
The Hasse diagram $\Hasse(G / \O)$ is shown in Figure~\ref{fig:Hasse}.
\label{ex:hyperperm}
\end{example}

\begin{figure}
\centering
\begin{tikzpicture}
\node (2) at (0,0) {$2$};
\node (4) at (0,-1) {$4$};
\node (1) at (0,-2) {$1$};
\node (3) at (-1,-3) {$3$};
\node (5) at (1,-3) {$5$};
\draw[->] (2) to (4);
\draw[->] (4) to (1);
\draw[->] (1) to (3);
\draw[->] (1) to (5);
\end{tikzpicture}
\label{fig:Hasse}
\caption{The Hasse diagram $\Hasse(G / \O)$ from Example~\ref{ex:hyperperm}.}
\end{figure}
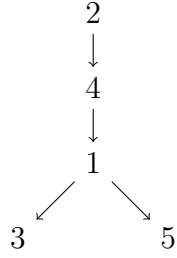

An \emph{ordered pseudo-partition (OPP)}~\cite[Definition 4.9]{Ag16} of $[n]$ is a sequence of sets $(A_0,A_1, \dots, A_p,B)$ where:
\begin{itemize}
    \item $[n] = A_0 \sqcup A_1 \sqcup \cdots \sqcup A_p  \sqcup B$.
    \item $A_0, A_1, \dots, A_p \neq \emptyset$.
\end{itemize}
Here $B$ is allowed to be empty.

\begin{remark}
Our ordering of the parts slightly differs from~\cite[Definition 4.9]{Ag16}.
We place $B$ as the last part rather than the first part.
This placement of $B$ fits more natural with notions we have developed around acyclic orientations.
\end{remark}
Let $\PSC_{n,k,j}$ denote the collection of ordered pseudo-partitions $(A_0, \dots, A_p,B)$ of $[n]$ with:
\begin{itemize}
    \item $0 \leq |B| \leq k-1$.
    \item $k \leq |B| + |A_p| \leq n$.
    \item $n-j = |B| + p + 1$.
\end{itemize}

\begin{proposition}[{\cite[Theorem 4.10]{Ag16}}]
The hyper-permutahedron $\Pi_{n-1}(k-1)$ has $f$-vector entries
\[f_j = |\PSC_{n,k,j}|.\]
\end{proposition}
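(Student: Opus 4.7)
The plan is to use Theorem~\ref{THM:HypegraphicPolytope} and Lemma~\ref{lem:Omega} to identify the $j$-faces of $\Pi_{n-1}(k-1)=P_G$ (with $G = \binom{[n]}{k}$) with a distinguished family of set compositions of $[n]$, and then to biject these with $\PSC_{n,k,j}$. By Theorem~\ref{THM:HypegraphicPolytope} the faces correspond to $\O \in \bigcup_{F \in Flats(G)} \mathfrak{O}(G/F)$, and by Lemma~\ref{lem:Omega}(b) each $\O$ yields a canonical set composition $\Psi(\O)$ whose underlying set partition is $[n]/\O$; since $\dim C_\O = |[n]/\O|$, the $j$-faces correspond precisely to the $\O$ with $|[n]/\O| = n-j$.

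The central structural claim I would first establish is that for $G = \binom{[n]}{k}$ every canonical set composition has the form
\[
\Psi(\O) = (C_0, C_1, \ldots, C_p,\, \{b_l\}, \{b_{l-1}\}, \ldots, \{b_1\}),
\]
where $B = \{b_1 < \cdots < b_l\} \subseteq [n]$ satisfies $0 \leq l \leq k-1$ and $|C_p| + l \geq k$. To prove this I would run the max-min-source procedure of Lemma~\ref{lem:Omega}(b). Since $\O = \Omega(\Psi(\O))$, the head of each hyperedge $U \in G \setminus F$ is $U \cap A_{j(U)}$ with $j(U) = \min\{j : U \cap A_j \neq \emptyset\}$, so every arrow of the restriction $G/\O|_{A_{i,m}}$ comes from a hyperedge $U \subseteq A_{i,m}$ and therefore requires $|A_{i,m}| \geq k$. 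Consequently, as soon as $|A_{i,m}| < k$ the restricted DAG has no edges, every remaining class is a source, and the max-min rule selects them as singletons in decreasing order of value; a parallel check shows that any non-singleton block of $V/\O$, whether produced by a head of size $\geq 2$ or by a $V/F$-contracted clique of size $\geq k$, requires $|A_{i,m}| \geq k$ and so cannot appear in this tail regime.

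Granted this description, the bijection $\Phi$ sends $\Psi(\O)$ to the OPP $(C_0, \ldots, C_p, B)$, and the inverse sends an OPP $(A_0,\ldots,A_p, B)$ to the set composition obtained by appending the elements of $B$ as singletons in decreasing order; Lemma~\ref{lem:Omega} combined with the max-min rule confirms that this is precisely $\Psi$ of the orientation $\Omega$ of that set composition. The dimension identity $n - j = |[n]/\O| = (p+1) + l$ matches the OPP condition $n - j = |B| + p + 1$, and the constraints $|B| \leq k-1$ and $|A_p| + |B| \geq k$ correspond exactly to the two conditions on the tail of $\Psi(\O)$, so $\Phi$ restricts to a bijection with $\PSC_{n,k,j}$ for each $j$, giving $f_j = |\PSC_{n,k,j}|$.

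The main obstacle throughout is the structural claim, which requires the observation that the same threshold $|A_{i,m}| \geq k$ governs both the existence of arrows in the restricted DAG and the existence of non-singleton blocks in $[n]/\O$; once this dichotomy is in place, the remainder of the argument reduces to bookkeeping and the dimension count.
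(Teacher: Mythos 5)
Your proposal is correct, but it takes a genuinely different route from the paper. The paper's proof works upward through the face lattice: it first identifies the vertices with $\PSC_{n,k,0}$ using Proposition~\ref{prop:hyperperm}, then uses Lemma~\ref{lem:simple} to realize $1$-faces as single edge-contractions of the Hasse diagrams $\Hasse(G/\O)$, matches these with $\PSC_{n,k,1}$, and handles $j>1$ by iterating this contraction procedure on the preposets equivalent to OPPs. Your argument instead treats all faces uniformly: you characterize the canonical set composition $\Psi(\O)$ of every face at once, showing it is a sequence of blocks followed by a decreasing tail of singletons, and then read off the OPP directly. The pivotal observation that makes this work --- and which you correctly isolate --- is that $\O=\Omega(\Psi(\O))$ forces the head of each hyperedge $U$ to lie in the earliest part of $\Psi(\O)$ that $U$ meets, so an arrow surviving in the restriction to a suffix $A_{i,m}$ can only come from a hyperedge entirely contained in $A_{i,m}$; the same threshold $|A_{i,m}|\ge k$ then controls both arrows and non-singleton blocks, which yields the chain-plus-antichain structure and the two numerical constraints $|B|\le k-1$ and $|A_p|+|B|\ge k$ defining $\PSC_{n,k,j}$. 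What the paper's approach buys is brevity given the machinery already in place (simplicity and Lemma~\ref{lem:simple}), at the cost of leaving the general-$j$ step as an informal iteration; your approach buys an explicit, non-inductive bijection valid for every $j$ simultaneously, which is arguably more complete. Two small points you should spell out when writing this up: first, for the inverse map you need that applying $\Omega$ to the composition $(A_0,\dots,A_p,\{b_l\},\dots,\{b_1\})$ produces an orientation whose partition $[n]/\Omega(\cdot)$ has the parts $A_i$ as single blocks rather than a strict refinement (this follows because $|A_{i,m}|\ge k$ guarantees any two elements of $A_i$ lie in a common head or contracted hyperedge, so the dimension count $n-j=p+1+|B|$ is as claimed); second, you should make explicit where the tail begins, namely $B$ is the union of the parts $A_i$ with $|A_{i,m}|<k$, so that the map to OPPs is canonically defined.
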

\begin{proof}
Let $G = \binom{[n]}{k}$ and take $\O \in \mathfrak{O}_0(G)$.
From Proposition~\ref{prop:hyperperm} and its proof we know that $\Hasse(G / \O)$ is obtained from a unique set composition $A=(\{a_1\},\ldots, \{a_{n-k+1}\}, B)$.
This gives us $(A_0, \dots, A_p,B)\in \PSC_{n,k,0}$ where $A_i=\{a_{i+1}\}$. Conversely, given $(A_0, \dots, A_p,B)\in \PSC_{n,k,0}$, we must have that
 $$|B|=n-p-1\le k-1 \qquad\implies\qquad n-k\le p.$$
 The only possibility is if $|B|=k-1$, $p=n-k$ and $|A_i|=1$ and this gives us a unique $\O \in \mathfrak{O}_0(G)$. Hence $f_0=|\PSC_{n,k,0}|$.
 
 For the 1-faces, we know that they are obtained by contracting a single edge of $\Hasse(G / \O)$ for all  $\O \in \mathfrak{O}_0(G)$.
 For any $\O \in \mathfrak{O}_0(G)$, we have its set composition $A=(\{a_1\},\ldots, \{a_{n-k+1}\}, B)$.
 There are two types of edges in $\Hasse(G / \O)$. If we contract an edge $(a_i,a_{i+1})$ then we obtain a unique OPP 
  $$A'=(\{a_1\},\ldots,\{a_i,a_{i+1}\},\ldots \{a_{n-k+1}\}, B)\in \PSC_{n,k,1}.$$
  If we contract an edge $(a_{n-k+1},b)$ then we obtain
  $$A'=(\{a_1\},\ldots,\{a_{n-k+1},b\}, B\setminus\{b\})\in \PSC_{n,k,1}.$$
  Conversely, given $A'\in \PSC_{n,k,1}$ there are exactly two possible Hasse diagrams that can contract to it giving us a 1-face. Hence $f_1=|\PSC_{n,k,1}|$.
We can continue this iteration to show that $f_j=|\PSC_{n,k,j}|$ for $j>1$.
The process also clarifies why we want the possibility of $B$ to be empty in the definition of OPPs.

Indeed one sees that the data any OPP $(A_0, A_1, \dots, A_p, B)$ is equivalent to a poset on $\{A_i : 0 \leq i \leq p\} \sqcup B$ (i.e. a preposet on $[n]$) where 
\begin{enumerate}
\item[(i)] $A_i > A_{i'}$ for each $i < i'$, 
\item[(ii)] $A_i > b$ for each $i$ and $b \in B$, 
\item[(iii)] and the $b$ and $b'$ are incomparable for any distinct $b, b' \in B$.
\end{enumerate}
Moreover, if $(A_0, A_1, \dots, A_p, B) \in \PSC_{n,k,j}$ after contracting any edge in the Hasse diagram of the equivalent preposet just described we will obtain a preposet equivalent to an element of $\PSC_{n,k,j+1}$.
We can see that any element of $\PSC_{n,k,j}$ can be obtain by $j$ contractions of a poset equivalent to an element of $\PSC_{n,k,0}$.
For a given $(A_0, A_1, \dots, A_p, B) \in \PSC_{n,k,j}$ first choose any linear order $A_i$ for $1 \leq i \leq p$.
Next declare that $a \in A_i$ is greater that $a' \in A_i'$ for any $i < i'$.
Lastly add that relations that $a \in A_i$ for any $i$ is greater than each $b \in B$.
There are many ways on doing this depending on the number of flags of faces in the hyper-permutahdron.
We can also see that any $j$ contractions in the Hasse diagram of the poset corresponding to an element of $\PSC_{n,k,0}$ will result is a preposet equivalent to the data of an element of $\PSC_{n,k,j}$
\end{proof}

\bibliographystyle{alpha}
\bibliography{AntiChromQuasi}
\end{document}